\newtheorem{lemma}{Lemma}
\newtheorem{claim}{Claim}
\newtheorem{corollary}{Corollary}
\theoremstyle{definition}
\newtheorem{question}{Question}
\newtheorem{definition}{Definition}
\newtheorem{proposition}{Proposition}
\newtheorem{theorem}{Theorem}
\newtheorem{fact}{Fact}
\newcommand{\Kat}{Kat\v{e}tov}
\newcommand{\I}{\mathcal{I}}
\newcommand{\J}{\mathcal{J}}
\def\leukfrac#1/#2{\leavevmode
               \kern.1em
                \raise.9ex\hbox{\the\scriptfont0 ${}_#1$}
                \hskip -1pt\kern-.1em
                /\kern-.15em\lower.10ex\hbox{\the\scriptfont0 ${}_#2$}}
\def\diam{\mathop{\operator@font diam}\nolimits}
\begin{document}

\title[No K-minimal  tall Borel ideal]{No minimal tall Borel ideal in the Kat\v etov order}

\author[Greb\'{\i}k]{Jan Greb\'{\i}k${}^1$}

\address{${}^1$ Institute of Mathematics\\
Academy of Sciences of the Czech Republic\\
\v Zitn\'a 609/25  \\
 110 00 Praha 1-Nov\'e M\v esto, Czech Republic}

\email{Greboshrabos@seznam.cz}

\author[Hru\v{s}\'ak]{Michael Hru\v{s}\'ak${}^2$}

\address{${}^2$Centro de Ciencas Matem\'aticas\\
UNAM\\
A.P. 61-3, Xangari, Morelia, Michoac\'an\\
58089, M\'exico}

\email{michael@matmor.unam.mx}

\thanks{{The first-listed author was supported by the GACR project 15-34700L and RVO:
67985840.}{The second-listed author was supported by a PAPIIT grant IN 102311 and CONACyT grant 177758.}}

\date{\today}

\keywords{Borel ideals, Kat\v{e}tov order}

\subjclass{ 03E05, 03E15, 03E17}

\begin{abstract}
Answering a question of the second listed author we show that there is no tall Borel ideal minimal among all tall Borel ideals in the Kat\v etov order. 
\end{abstract}

\maketitle

\section{Introduction}

Given two ideals $\I$ and $\J$ on $\omega$ we say that $\I$ is \emph{Kat\v etov below} $\J$ ($\I\leq_K\J$) if there is a function 
$f:\omega\to\omega$ such that $f^{-1}[I]\in\J$, for all $I\in\I$. The order, which we refer to as the \emph{Kat\v etov order} was 
introduced by M. Kat\v etov \cite{katetov} in 1968  to study convergence in topological spaces. 

This order  is a usefull tool for studying combinatorial properties of filters and ideals (see e.g. \cite{brendle-flaskova, hrusak-filters,hrusak-garcia,hrusak-zapletal,laczkovichreclaw2008, sabok-zapletal, solecki}. However, many fundamental  problems concerning the  Kat\v etov order  remain  open  \cite{guzman-meza, hrusak-filters, hrusak-borel, meza-tesis, minami-sakai,sakai}. In this note we provide the solution to  possibly the most  fundamental of them all by  proving that there is no tall Borel ideal minimal among all tall Borel ideals in the Kat\v etov order. 

 Our set-theoretic notation is standard and follows \cite{kechristeoriadescriptiva} and \cite{kunen} .

\section{$F_\sigma$ ideals}

In this somewhat preliminary section we describe several ways to code $F_\sigma$ ideals. This allows us to speak about complexity of classes of $F_\sigma$ ideals with certain properties such as containing the ideal of finite sets $\mathsf{fin}$, being tall, being above fixed ideal $\mathcal{I}$ etc.
A very useful way of presenting $F_\sigma$ ideals was given by Mazur \cite{mazur}.
Recall that a map $\varphi:\mathcal{P}(\omega)\to [0,\infty]$ is a \emph{lower-semicontinuous submeasure (lcsms)} if 
\begin{itemize}
\item $\varphi(\emptyset)=0$,
\item $A\subseteq B$ implies $\varphi(A)\le \varphi(B)$,
\item $\varphi(A\cup B)\le \varphi(A)+\varphi(B)$,
\item $\varphi(A)=\lim_{n\to\infty}\varphi(A\cap n)$.
\end{itemize}
To each lcsms $\varphi$ naturally corresponds an ideal $Fin(\varphi):=\{A:\varphi(A)<\infty\}$.

\begin{theorem}[Mazur \cite{mazur}]
An ideal $\mathcal{I}$ is $F_\sigma$ if and only if there is lcsms $\varphi$ such that $\mathcal{I}=Fin(\varphi)$.
\end{theorem}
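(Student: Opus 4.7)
The plan is to prove both directions. The forward implication (if $\mathcal{I}=Fin(\varphi)$ for some lcsms $\varphi$, then $\mathcal{I}$ is $F_\sigma$) is the short one: I would write $Fin(\varphi)=\bigcup_{n\in\omega}\{A\subseteq\omega:\varphi(A)\leq n\}$ and show each level set is closed in $2^\omega$. Indeed, if $A_m\to A$ in $2^\omega$ with $\varphi(A_m)\leq n$, then for each fixed $k$ one has $A\cap k=A_m\cap k\subseteq A_m$ for all sufficiently large $m$, so monotonicity gives $\varphi(A\cap k)\leq n$; letting $k\to\infty$ and applying the fourth lcsms axiom yields $\varphi(A)\leq n$.

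For the converse, suppose $\mathcal{I}=\bigcup_n F_n$ with each $F_n$ closed, $\emptyset\in F_n$, and $F_n\subseteq F_{n+1}$. The strategy is to build a sequence $(C_n)_n$ of closed, hereditary subsets of $2^\omega$ with $\bigcup_n C_n=\mathcal{I}$ satisfying the subadditivity-style property
\begin{equation*}
A\in C_n,\ B\in C_m\ \Longrightarrow\ A\cup B\in C_{n+m},
\end{equation*}
and then to define $\varphi(A)=\min\{n:A\in C_n\}$, with the convention $\min\emptyset=\infty$ and $\varphi(\emptyset)=0$. Monotonicity and subadditivity of $\varphi$ follow immediately from the properties of $(C_n)_n$, and lower-semicontinuity follows because each sublevel set $\{A:\varphi(A)\leq n\}$ equals $C_n$: the sequence $(\varphi(A\cap k))_k$ is increasing, and if its limit is a finite $n$, then each $A\cap k$ lies in $C_n$ and so $A\in C_n$ by closedness of $C_n$, whereas an infinite limit forces $\varphi(A)=\infty$ by monotonicity.

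To construct the $C_n$, I would first replace $F_n$ by its hereditary hull $F_n^\downarrow=\{B:\exists A\in F_n,\ B\subseteq A\}$, which remains closed because $F_n$ is compact and the containment relation is closed in $2^\omega\times 2^\omega$. Then set
\begin{equation*}
C_n=\{A_1\cup\cdots\cup A_n:A_1,\ldots,A_n\in F_n^\downarrow\},
\end{equation*}
realized as the continuous image of the compact set $(F_n^\downarrow)^n$ under the $n$-fold union map, hence itself compact and closed. Checking that $C_n$ is hereditary, that $C_n\subseteq C_{n+1}$ (pad with $\emptyset$ and use $F_n^\downarrow\subseteq F_{n+1}^\downarrow$), that $\bigcup_n C_n=\mathcal{I}$, and that the displayed additivity property holds is then routine bookkeeping using the monotonicity of $(F_n)_n$.

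The principal obstacle I anticipate is precisely this construction of closed ``sub-ideals'' covering $\mathcal{I}$: the naive candidate, consisting of \emph{all} finite unions of elements of $F_n^\downarrow$, is in general only $F_\sigma$, not closed. The trick of capping the number of summands to $n$ while simultaneously enlarging $n$ in both roles---``number of summands'' and ``tier index''---yields a closed set at every level and still captures all of $\mathcal{I}$ in the union, which is exactly what is needed to make $\varphi$ finite on $\mathcal{I}$ and subadditive on all of $\mathcal{P}(\omega)$. Everything else reduces to a direct verification from the lcsms axioms.
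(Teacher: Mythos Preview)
The paper does not supply its own proof of this theorem; it is stated as a cited result of Mazur and then used as background. Your argument is correct and is essentially the standard proof: for one direction, each sublevel set $\{A:\varphi(A)\le n\}$ is closed by the fourth lcsms axiom; for the other, you manufacture closed hereditary layers $C_n$ satisfying $C_n+C_m\subseteq C_{n+m}$ by taking $n$-fold unions from the downward closure of the $n$th closed piece, and read off an integer-valued lcsms $\varphi(A)=\min\{n:A\in C_n\}$. The key device---bounding the number of summands by the same index $n$ so that each $C_n$ stays compact while the union still exhausts $\mathcal{I}$---is exactly Mazur's, so there is nothing to contrast here beyond noting that the paper simply quotes the result.
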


We let $K(2^\omega)$ denote the hyperspace of all compact subsets of $2^\omega$ endowed with the Hausdorff metric. For $K\in K(2^\omega)$ and $n\in\omega$ we define  $\langle K\rangle^n=\{\bigcup_{i\le n} x_i:\ x_i\in K\}$ and $\downarrow K=\{x\in 2^\omega :\ \exists y\in K\ x\subseteq y\}$. Note that the assignment $K\mapsto \downarrow\langle K\rangle^n$ is Borel, in fact, continuous.
We say that $K\in K(2^\omega)$ is a \emph{code} for an $F_{\sigma}$ ideal $\mathcal{I}_{K}=\bigcup_{n\in\omega} \downarrow \langle K\rangle^n$. Note that $\mathcal{I}_K$ is not necessarily proper or contains $fin$. 

\begin{proposition}\label{prop1}
For every $F_\sigma$ ideal $\mathcal{I}$ there is $K\in K(2^\omega)$ such that $\mathcal{I}=\mathcal{I}_K$.
\end{proposition}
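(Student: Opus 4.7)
The plan is to construct $K$ directly from an $F_\sigma$ filtration of $\mathcal{I}$, using the classical ``large minimum'' trick to force compactness of the resulting union. First, write $\mathcal{I} = \bigcup_{n\in\omega} F_n$ with each $F_n$ closed, hence compact, in $2^\omega$. Replacing $F_n$ by $\downarrow(F_0 \cup \cdots \cup F_n)$ I may assume that the sequence is increasing and each $F_n$ is downward closed; here $\downarrow F$ is compact for compact $F$, being the projection of the compact set $\{(x,y) \in 2^\omega \times 2^\omega : x \subseteq y,\ y \in F\}$. I then enlarge each $F_n$ by the finite set $\{\{m\} : m \le n,\ \{m\} \in \mathcal{I}\}$; this preserves compactness, downward closedness and monotonicity of the filtration, stays inside $\mathcal{I}$, and ensures that $\{m\} \in F_m$ whenever $\{m\} \in \mathcal{I}$.

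Next, define
\[ L_n = \{ A \in F_n : A \cap n = \emptyset \} \qquad \text{and} \qquad K = \bigcup_{n\in\omega} L_n. \]
Each $L_n$ is compact as the intersection of $F_n$ with the clopen condition ``$A\cap n=\emptyset$'', and $\emptyset \in L_0 = F_0$. The key point is that $K$ itself is compact. Indeed, given a sequence $(A_k) \subseteq K$ with $A_k \in L_{n_k}$, if the $n_k$ are bounded then some subsequence stays inside a fixed compact $L_n$, and otherwise along a subsequence $n_k \to \infty$ and $A_k \subseteq [n_k, \infty)$ forces $A_k \to \emptyset \in K$. Thus $K$ is closed in $2^\omega$.

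Finally, I verify $\mathcal{I}_K = \mathcal{I}$. The inclusion $\mathcal{I}_K \subseteq \mathcal{I}$ is immediate since $K \subseteq \mathcal{I}$ and $\mathcal{I}$ is closed under finite unions and subsets. For the converse, given $A \in \mathcal{I}$ pick $n$ with $A \in F_n$ and write
\[ A = (A \setminus n) \,\cup\, \bigcup_{m \in A \cap n} \{m\}. \]
The tail $A \setminus n$ lies in $F_n$ by downward closedness and satisfies $(A \setminus n) \cap n = \emptyset$, so $A \setminus n \in L_n \subseteq K$. Each singleton $\{m\}$ with $m \in A \cap n$ is in $\mathcal{I}$, hence in $F_m$ by our preparation, and $\{m\} \cap m = \emptyset$, so $\{m\} \in L_m \subseteq K$. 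Therefore $A$ is a union of at most $n+1$ elements of $K$, i.e., $A \in \downarrow \langle K \rangle^n \subseteq \mathcal{I}_K$. The main subtlety is the initial preparation of $(F_n)$ so that it is simultaneously increasing, downward closed, and absorbs each singleton of $\mathcal{I}$ by the correct stage; once this is in place, the compactness of $K$ and the identification $\mathcal{I}_K = \mathcal{I}$ are essentially immediate.
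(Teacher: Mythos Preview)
Your proof is correct and follows essentially the same ``large minimum'' idea as the paper: both take an $F_\sigma$ decomposition of $\mathcal{I}$ and push the $n$-th piece out past $n$ (you by intersecting $F_n$ with $\{A:A\cap n=\emptyset\}$, the paper by replacing the initial segment of each $x\in K_n$ with the single marker $\{a_n\}$ from an enumeration of $\operatorname{supp}(\mathcal{I})$) so that the union accumulates only at $\emptyset$ and is therefore compact. Your explicit preparation (increasing, downward closed, singletons absorbed by the right stage) replaces the paper's use of the support enumeration, but the argument is otherwise the same, and you likewise obtain that every infinite $A\in\mathcal{I}$ is $=^*$ some element of $K$ (namely $A\setminus n$).
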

\begin{proof}
We find a compact $K\in K(2^\omega)$ such that $\mathcal{I}=\mathcal{I}_K$ and for every infinite $A\in \mathcal{I}$ there is $x\in K$ such that $x=^*A$.
Let $supp(\mathcal{I})=\{n\in\omega:\{n\}\in \mathcal{I}\}$, we may assume that this set is infinite and is increasingly enumerated as $\{a_n\}_{n\in\omega}$.
Fix a sequence of compacts $\{K_n\}_{n\in\omega}$ such that $\mathcal{I}=\bigcup_{n\in\omega} K_n$.
Define a tree $T=\bigcup_{n\in\omega} T_n$ such that $y\in [T_n]$ if and only if there is $x\in K_n$ such that $y\upharpoonright a_n=0$, $y(a_n)=1$ and $y\upharpoonright (\omega\setminus (a_n+1))=x\upharpoonright (\omega\setminus (a_n+1))$.
Then $K=[T]$ has the desired properties.
\end{proof}

We say that $\{c_n\}_{n\in\omega}$ is a \emph{sequence of colorings} of $\omega$ if each $c_n$ is a coloring of $n-$tuples of $\omega$ with two colors i.e. $c_n:[\omega]^n\to 2$ for each $n\in\omega$. We say that $A\subseteq \omega$ is  \emph{simultaneously monochromatic for $\{c_n\}_{n<\omega}$} if $c_n([A]^n)\le 1$ for every $n\in \omega$, and we denote the set of all simultaneously monochromatic sets by $S(\{c_n\})$. One can easily check that $S(\{c_n\})$ is closed and contains all singletons i.e. $\mathcal{I}_{S(\{c_n\})}$ is an $F_\sigma$ ideal that contains $\mathsf{fin}$. 

\begin{claim}
Let $f,g\in \omega^\omega$. Then for every sequence $\{d_n\}_{n\in \omega}$ where $d_n:[\omega]^{f(n)}\to g(n)$ there is a sequence of colorings $\{c_n\}_{n\in\omega}$ with the same infinite simultaneously monochromatic sets.
\end{claim}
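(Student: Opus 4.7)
The plan is to reduce the problem in two stages: first cut the palette down to two colors, then inflate each arity up to the required index $n$. Throughout I rely on the elementary fact that for an infinite $A \subseteq \omega$, any $m$-element subset of $A$ appears as the $m$ smallest elements of some $n$-element subset of $A$ whenever $n \ge m$.

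For the first reduction, for each $n \in \omega$ and each $j < g(n)$ I would take the indicator coloring $d_n^j : [\omega]^{f(n)} \to 2$ of $d_n^{-1}(j)$. An infinite $A$ is simultaneously monochromatic for $\{d_n\}$ iff it is simultaneously monochromatic for the enlarged family $\{d_n^j : n \in \omega,\ j < g(n)\}$. The forward direction is immediate; conversely, if each $d_n^j$ is constant on $[A]^{f(n)}$, fix any $X \in [A]^{f(n)}$ and let $j_0 = d_n(X)$. Then $d_n^{j_0}(X) = 1$ forces $d_n^{j_0} \equiv 1$ on $[A]^{f(n)}$, whence $d_n \equiv j_0$ on $[A]^{f(n)}$.

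For the second reduction, I would enumerate the family just produced as $\{e_k\}_{k \in \omega}$ with $e_k : [\omega]^{h(k)} \to 2$, and choose a strictly increasing sequence $n_0 < n_1 < \cdots$ with $n_k \ge h(k)$ (e.g.\ inductively, $n_0 = h(0)$ and $n_k = \max(n_{k-1}+1, h(k))$). Define $c_n \equiv 0$ when $n \notin \{n_k : k \in \omega\}$, and for $X \in [\omega]^{n_k}$ with increasing enumeration $x_0 < \cdots < x_{n_k - 1}$ set $c_{n_k}(X) = e_k(\{x_0, \ldots, x_{h(k)-1}\})$. For infinite $A$, simultaneous monochromaticity of $\{e_k\}$ on $A$ immediately yields simultaneous monochromaticity of $\{c_n\}$, since $c_{n_k}(X)$ only depends on the value that $e_k$ takes on $[A]^{h(k)}$. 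Conversely, if $c_{n_k}$ is constant on $[A]^{n_k}$, then any $Y \in [A]^{h(k)}$ can be completed to some $X \in [A]^{n_k}$ having $Y$ as its bottom $h(k)$ elements, so the constant value of $c_{n_k}$ pins down a constant value of $e_k$ on $[A]^{h(k)}$. The whole argument is routine bookkeeping and I don't foresee any genuine obstacle; the only point worth watching is that $A$ is infinite (otherwise the completion of $Y$ to $X$ would fail), which is precisely the hypothesis in the statement.
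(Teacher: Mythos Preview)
Your proof is correct and follows essentially the same two-step reduction as the paper: first split each $d_n$ into indicator $2$-colorings $d_n^j$ (the paper's $c_i$ with $c_i(a)=0\iff c(a)\neq i$ is exactly your indicator), then lift each resulting $2$-coloring to a higher arity via projection onto an initial segment and pad the remaining indices with constant colorings. The only difference is that you spell out the lift explicitly, whereas the paper merely asserts ``it can be easily shown'' that any $d:[\omega]^n\to 2$ admits a $\bar d:[\omega]^m\to 2$ with the same infinite monochromatic sets for $m\ge n$.
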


\begin{proof}
First we show the following.
For every $n,k\in \omega$ and every coloring $c:[\omega]^n\to k$, there is a finite sequence $\{c_i\}_{i<k}$ where $c_i:[\omega]^n\to 2$ such that $A\subseteq \omega$ is monochromatic for $c$ if and only if it is monochromatic for $\{c_i\}_{i<k}$. 

To prove this define $c_i(a)=0$ if and only if $c(a)\not= i$ for $a\in [\omega]^n$. Let $A\subseteq\omega$ be monochromatic for $c$ and assume that $c(A)=l$ for some $l<k$, then $c_i(A)=0$ for $i\not=l$ and $c_l(A)=1$.
On the other hand if $A$ is monochromatic for all $\{c_i\}_{i<k}$ then pick arbitrary $a\in[A]^n$. We have that $c(a)=l$ for some $l<\omega$ which implies $c_l(A)=1$ i.e. $c(A)=l$.

Now we can use this observation to get a $\{\bar{d}_{n}\}_{n\in\omega}$ such that $\bar{d}_n:[\omega]^{h(n)}\to 2$ and $\{\bar{d}_{n}\}_{n\in\omega}$ has the same simultaneously monochromatic sets as $\{d_{n}\}_{n\in\omega}$. 

To finish the proof we must put $h$ in the correct form.
It can be easily shown that for every $d:[\omega]^n\to 2$ and every $m\ge n$ there is $\bar{d}:[\omega]^m\to 2$ with the same infinite monochromatic sets.
Therefore we may assume that $h$ is increasing and if $Rng(h)\not=\omega$ then we simply add to $\{\bar{d}_n\}_{n<\omega}$ trivial colorings to the missing places.
\end{proof}

\begin{proposition}
Let $\mathcal{I}$ be an $F_\sigma$ ideal that contains $fin$ then there is a sequence of colorings $\{c_n\}_{n<\omega}$ such that $\mathcal{I}=\mathcal{I}_{S(\{c_n\})}$.
\end{proposition}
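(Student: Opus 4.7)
\emph{Plan.} My plan is to realize $\mathcal{I}$ as $\mathcal{I}_K$ for a \emph{downward-closed} compact $K \subseteq 2^\omega$ and then to read off the colorings directly from the tree of $K$, with no further combinatorial work needed.

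First I would apply Proposition~\ref{prop1} to get a compact $K_0 \subseteq 2^\omega$ with $\mathcal{I} = \mathcal{I}_{K_0}$, and replace $K_0$ by its pointwise downward closure
$K = \{x \in 2^\omega : \exists y \in K_0,\ x \le y\}$.
A short limit argument (coordinate-wise inequalities survive pointwise limits in $2^\omega$) shows $K$ is still compact, and one checks that every element of $\langle K\rangle^k$ lies below an element of $\langle K_0\rangle^k$ and vice versa, so $\mathcal{I}_K = \mathcal{I}_{K_0} = \mathcal{I}$. Let $T \subseteq 2^{<\omega}$ be the tree with $[T]=K$; by downward closedness of $K$ each level $T_n \subseteq 2^n$ is closed under the bitwise order, and since $\emptyset \in K$ one has $0^n \in T_n$. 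I then define
\[
c_n : [\omega]^n \to 2, \qquad c_n(a) = 1 \iff \chi_a \upharpoonright n \in T_n,
\]
where $\chi_a \in 2^\omega$ is the characteristic function of $a$.

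For the inclusion $\mathcal{I} \subseteq \mathcal{I}_{S(\{c_n\})}$, I would write $A \in \mathcal{I} = \mathcal{I}_K$ as $A = \bigcup_{i\le k}(A \cap X_i)$ with $X_i \in K$ and note that each $A \cap X_i$ is again in $K$ by downward closedness. Every $B \in K$ is simultaneously monochromatic for $\{c_n\}$: for $a \in [B]^n$, $\chi_a \le \chi_B$ yields $\chi_a \upharpoonright n \le \chi_B \upharpoonright n \in T_n$, and downward closedness of $T_n$ forces $\chi_a \upharpoonright n \in T_n$, so $c_n(a) = 1$. Hence $A$ is a finite union of simultaneously monochromatic sets.

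For the converse, I would show that every infinite simultaneously monochromatic $A$ lies in $K$ (finite ones trivially lie in $\mathsf{fin} \subseteq \mathcal{I}$). If not, pick the least $n$ with $\chi_A \upharpoonright n \notin T_n$. Since $A \setminus n$ is infinite, choose $a' \in [A \setminus n]^n$; then $\chi_{a'} \upharpoonright n = 0^n \in T_n$ and $c_n(a') = 1$. On the other hand, adjoining $n - |A \cap n|$ elements of $A \setminus n$ to $A \cap n$ gives $a \in [A]^n$ with $\chi_a \upharpoonright n = \chi_A \upharpoonright n \notin T_n$ and $c_n(a) = 0$, contradicting monochromaticity of $c_n$ on $[A]^n$. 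The main obstacle is exactly this last dichotomy: it relies on both the infinity of $A$ (to produce tuples with prefix $0^n$) and the presence of $0^n$ in $T_n$ (the point of the preliminary reduction to a downward-closed $K$). Together these pin down the infinite simultaneously monochromatic sets as precisely the infinite members of $K$, and $\mathcal{I}_{S(\{c_n\})} = \mathcal{I}_K = \mathcal{I}$ follows.
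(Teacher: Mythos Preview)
Your argument is correct. The reduction to a downward-closed compact $K$ is sound (the downward closure of a compact set in $2^\omega$ is compact, and the ideal it generates is unchanged), the levels $T_n$ of the tree of $K$ are then downward closed in $2^n$ with $0^n\in T_n$, and the two-color dichotomy on an infinite $A\notin K$ goes through exactly as you describe. The identification $\mathcal{I}_{S(\{c_n\})}=\mathcal{I}_K$ then follows: $K\subseteq S(\{c_n\})$ from the forward direction, while every infinite member of $S(\{c_n\})$ lies in $K$ and every finite one lies in $\mathsf{fin}\subseteq\mathcal{I}_K$.

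Your route is genuinely different from the paper's. The paper defines a \emph{three}-valued coloring $d_n(a)$ that records whether the full characteristic function $\chi_a$ lies in $K$; to force non-monochromaticity of infinite sets outside $K$, it interleaves an auxiliary ``scrambling'' sequence of colorings $\{r_n\}$ (e.g.\ congruence colorings) that no infinite set can be monochromatic for along all of $\omega$, and then invokes the preceding Claim to reduce three colors to two. Your construction instead lets $c_n(a)$ depend only on $\chi_a\upharpoonright n$, so the witnesses for non-monochromaticity are produced internally: an $n$-subset of $A$ lying entirely above $n$ gives color $1$ (via $0^n\in T_n$), while one agreeing with $A$ on $[0,n)$ gives color $0$. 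This avoids both the auxiliary colorings and the color-reduction step, at the cost of the small preliminary observation that $T_n$ is downward closed. Either approach yields the result; yours is shorter and self-contained, while the paper's makes the role of the compact generator $K$ (via membership of whole finite sets) more explicit.
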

\begin{proof} First, fix a sequence of colorings $\{r_n\}_{n\in\omega}$ such that for every infinite $A,I\subseteq \omega$ there is $n\in I$ such that $A$ is not monochromatic for $r_n$ (take for example $r_n(\{k_0,...k_{n-1}\})=1$ if and only if $k_0\equiv...\equiv k_{n-1} \mod n$).
Take $K\in K(2^\omega)$ such that $\downarrow K=K$ and $\mathcal{I}_K=\mathcal{I}$. Then we have that $\chi_{\{n\}}\in K$ for every $n\in \omega$. Define a sequence $\{d_n\}_{n\in \omega}$ such that

\begin{itemize}
\item $d_n:[\omega]^n\to 3$ for every $n\in \omega$,
\item $d_n(a)=1$ if $a\in K$,
\item $d_n(a)=0$ if $a\not\in K$ and $r_n(a)=0$,
\item $d_n(a)=2$ if $a\not\in K$ and $r_n(a)=1$.
\end{itemize}
Now let $A\subseteq \omega$ be infinite. If $A\in K$ than by definition $d_n([A]^n)=1$. Assume on the other hand that $A$ is simultaneously monochromatic for $\{d_n\}_{n\in \omega}$. To finish the proof it is enough to realize that if $d_{n+1}([A]^{n+1})=1$ then $d_n([A]^n)=1$ and that $A$ can be monochromatic in color $\{0,2\}$ only for finitely many $d_n$'s by definition of $\{r_n\}$. Then it follows that $A\in K$ and by the previous claim we can find a sequence of colorings $\{c_n\}_{n<\omega}$ such that $\mathcal{I}=\mathcal{I}_K=\mathcal{I}_{S(\{d_n\})}=\mathcal{I}_{S(\{c_n\})}$.
\end{proof}

We have described three ways how to code $F_\sigma$ ideals. Let us summarize this
\begin{itemize}
\item elements of $K(2^\omega)$ code $F_\sigma$ ideals,
\item elements of $\prod_{n<\omega}2^{[\omega]^n}$ code $F_\sigma$ ideals that contain $\mathsf{fin}$,
\item elements of $\mathbb{S}\subseteq (\omega\cup\{\infty\})^{[\omega]^{<\omega}}$ where $\mathbb{S}$ is closed subspace of submeasures on $[\omega]^{<\omega}$ code $F_\sigma$ ideals.
\end{itemize}

There are natural mappings between the spaces of codes derived from the fact that the spaces really code $F_\sigma$ ideals. For example $\{c_n\}_{n\in \omega}\mapsto S(\{c_n\})$ or $K\mapsto \downarrow K\mapsto \{c_n\}_{n\in \omega}$ as in the proof of Proposition \ref{prop1}.

\section{No \Kat \ minimal tall Borel ideal}

In this section we prove  the main result of the paper.
First we start with some definitions and notation. Given a Polish space $X$, we say that $Y\subseteq X$ is \emph{$\Pi^1_2$-complete} if for every $\Pi^1_2$ subset $Y'$ of a zero-dimensional Polish space $X'$ there is a continuous function $\sigma:X'\to X$ such that $x\in Y'$ if and only if $\sigma(x)\in Y$. For $X,Y$ compact Polish spaces we denote by $C(X,Y)$ the Polish space of all continuous maps from $X$ to $Y$ endowed with the supremum metric.

\begin{theorem}\cite[Theorem 37.14]{kechristeoriadescriptiva}
The set $\mathcal{U}\subseteq C(2^\omega\times 2^\omega,2)^\omega\times 2^\omega$, where $((f_n)_{n\in \omega},x)\in \mathcal{U}$ if and only if there is $A\subseteq \omega$ such that $(f_n(x,\_))_{n\in A}$ converges to $0$ pointwise, is $C(2^\omega\times2^\omega,2)^\omega$-universal for $\Sigma^1_2(2^\omega)$, i.e. every $\Sigma^1_2$ subset of $2^\omega$ appears as a section of $\mathcal U$.
\end{theorem}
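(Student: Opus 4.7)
My plan is to verify separately (i) that $\mathcal{U}$ is $\Sigma^1_2$ and (ii) that every $\Sigma^1_2$ subset of $2^\omega$ arises as a section $\mathcal{U}_{(f_n)}$. For (i), identifying subsets of $\omega$ with characteristic functions in $2^\omega$, the defining condition unfolds as
\[
\exists A\in 2^\omega\,\forall y\in 2^\omega\,\forall k\,\exists N\,\forall n\ge N\,\bigl(n\in A\Rightarrow f_n(x,y)=0\bigr),
\]
whose matrix is clopen; the block beginning at $\forall y$ is $\forall^{2^\omega}\Sigma^0_2$, hence $\Pi^1_1$, and prefixing $\exists A$ produces the $\Sigma^1_2$ upper bound.

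For (ii), I would first put $B\in\Sigma^1_2(2^\omega)$ into pointwise-convergence normal form. Writing $B=\mathrm{proj}_1(P)$ with $P\in\Pi^1_1(2^\omega\times 2^\omega)$, representing $P=\{(x,y):\forall z\,(x,y,z)\in W\}$ for an open $W$, and expanding $W=\bigcup_m W_m$ as an increasing union of basic clopens, the functions $g_m:=1-\chi_{W_m}$ satisfy $g_m\ge g_{m+1}$ and
\[
x\in B\iff\exists y\in 2^\omega\,\forall z\in 2^\omega\,\forall^\infty m\,g_m(x,y,z)=0,
\]
where compactness lets me further assume each $g_m$ depends only on the first $m$ bits of its three arguments. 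To absorb the outer $\exists y$ into an $\exists A$, I identify $\omega$ with the countable set $\{(n,s):n\in\omega,\,s\in 2^n\}$ and put $f_{(n,s)}(x,z):=g_n(x,\widehat{s},z)$ for a fixed extension $\widehat{s}\in 2^\omega$ of $s$; each $f_{(n,s)}$ is continuous because $g_n$ only sees the first $n$ coordinates of its middle argument. The forward inclusion is then immediate: whenever $(x,y)\in P$, the set $A_y:=\{(n,y\upharpoonright n):n\in\omega\}$ witnesses $((f_{(n,s)}),x)\in\mathcal{U}$, since $f_{(n,y\upharpoonright n)}(x,z)=g_n(x,y,z)$ along $A_y$.

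The main obstacle is the reverse inclusion: extracting from an arbitrary infinite witness $A\subseteq\omega\times 2^{<\omega}$ a branch $y\in 2^\omega$ satisfying $(x,y)\in P$. A compactness argument on $\{\widehat{s}:(n,s)\in A\}$ does produce an accumulation point $y_0\in 2^\omega$, and the monotonicity $g_m\ge g_{m+1}$ together with the continuity of each $g_m$ should transfer the convergence to $y_0$; however, a naive argument does not locate a subsequence of $A$ along which $s$ equals $y_0\upharpoonright n$, because the rate at which the $\widehat{s}$'s approach $y_0$ need not be coordinated with the level index $n$ along $A$. The remedy I envisage is to enlarge the test variable $z$ to a pair $(z_1,z_2)\in 2^\omega\times 2^\omega$ and to modify each $f_{(n,s)}$ by a clopen consistency test---roughly, penalizing $s$ when it disagrees with $z_1\upharpoonright n$---so that witnesses $A$ whose strings are not cofinal along a single branch of $2^{<\omega}$ fail the convergence condition for some $(z_1,z_2)$. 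After this refinement every successful $A$ is forced to essentially coincide with some $A_y$, at which point the production of $y$ and the verification of $(x,y)\in P$ become routine, completing the universality argument.
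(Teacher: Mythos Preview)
The paper does not supply a proof of this statement; it is quoted verbatim as \cite[Theorem 37.14]{kechristeoriadescriptiva} and used as a black box in the proof of the next proposition. So there is nothing in the paper to compare your argument against.

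On its own merits: your computation of the $\Sigma^1_2$ upper bound is fine (once you add that $A$ is infinite, which only inserts a $\Pi^0_2$ conjunct), and the forward inclusion in the universality argument is correct, thanks to your arrangement that $g_n$ depends only on the first $n$ bits of each variable.

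The gap you flag in the reverse inclusion is real, but the remedy you propose does not close it. If ``penalizing $s$ when $s\neq z_1\upharpoonright n$'' means setting $f_{(n,s)}(x,(z_1,z_2))=1$ there, then the \emph{forward} direction collapses: for your intended witness $A_y$ and any $z_1\neq y$, the penalty fires for all large $n$, so $(f_{(n,s)}(x,\cdot))_{(n,s)\in A_y}$ does not converge to $0$ at $(z_1,z_2)$. If instead the penalty sets the value to $0$ (so inconsistent terms are ignored), the reverse direction still fails: take $A=\{(k,0^{k-1}1):k\ge 1\}$. For every $z_1\in 2^\omega$ one has $0^{k-1}1\neq z_1\upharpoonright k$ for all large $k$, so the penalty makes the sequence eventually $0$ at every point regardless of $x$; thus $A$ always witnesses membership in $\mathcal U$ and the section is all of $2^\omega$, not $B$. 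In short, no clopen test that compares $s$ with a single universally quantified branch $z_1$ can simultaneously force witnesses to trace a branch and leave the branch witnesses $A_y$ intact.

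The standard route (as in Kechris and in Becker--Kahane--Louveau) does not try to coerce $A$ into being a branch. One instead builds, continuously in $x$, a compact family $K_x=\{\,z\mapsto (f_n(x,z))_n : z\in 2^\omega\,\}\subseteq 2^\omega$ so that $x\in B$ is equivalent to the existence of an infinite set almost disjoint from every member of $K_x$; the $\exists y$ quantifier is absorbed not into the index set but into the construction of the family $K_x$ via a tree/well-foundedness representation of the inner $\Pi^1_1$ set. If you want to complete your write-up, I would abandon the $(n,s)$-indexing and follow that line instead.
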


Recall that an ideal $\mathcal{J}$ on $\omega$ is \emph{tall} if for every infinite $A\subseteq \omega$ there is an infinite $B\in \mathcal{J}$ such that $B\subseteq A$. Let $\mathcal{T}\subseteq K(2^\omega)$ be the set of codes of tall $F_\sigma$ ideals, i.e $K\in \mathcal{T}$ if $\mathcal{I}_K$ is tall and $\downarrow K$ contains all singletons (or equivalently $\mathcal{I}_K$ is tall and contains $fin$). The following is essentially proved in \cite{beckeretal}, though presented in a different language.

\begin{proposition}
The set $\mathcal{T}$ is $\Pi^1_2$-complete.
\end{proposition}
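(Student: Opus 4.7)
The plan is to handle the $\Pi^1_2$ upper bound and the $\Pi^1_2$-hardness separately. For the upper bound: ``$\downarrow K$ contains every singleton'' is closed in $K(2^\omega)$, and tallness of $\mathcal{I}_K$ unpacks as the formula $\forall A\in[\omega]^\omega\,\exists B\in[A]^\omega\,\exists n\ B\in\downarrow\langle K\rangle^n$, whose matrix is Borel, giving a $\Pi^1_2$ condition overall. The real work is $\Pi^1_2$-hardness.

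Starting with a $\Pi^1_2$ set $Y'\subseteq X'$ in a zero-dimensional Polish space, I would first reduce to $X'=2^\omega$ by a standard continuous embedding. Let $Y=2^\omega\setminus Y'$ and invoke the universal-set theorem quoted above to fix a sequence $\bar f=(f_n)_{n\in\omega}\in C(2^\omega\times 2^\omega,2)^\omega$ such that
\[
x\in Y\iff \exists A\in[\omega]^\omega\ \forall y\in 2^\omega\ \bigl|A\cap D_{x,y}\bigr|<\omega,
\]
where $D_{x,y}:=\{n\in\omega:f_n(x,y)=1\}$. My candidate reduction is the map $\sigma:2^\omega\to K(2^\omega)$ defined by
\[
\sigma(x)=K_x:=\{\chi_{D_{x,y}}:y\in 2^\omega\}\cup\{\chi_{\{n\}}:n\in\omega\}\cup\{\emptyset\},
\]
so that $K_x$ codes the ``generating'' sets $D_{x,y}$ together with all singletons.

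Three properties of $\sigma$ then need checking. First, $K_x$ is compact: its first piece is the continuous image of $2^\omega$ under $y\mapsto (f_n(x,y))_n$, and the remainder is a convergent sequence with limit $\emptyset$. Second, $\sigma$ is continuous in the Hausdorff metric; this follows from the uniform continuity of $(x,y)\mapsto (f_n(x,y))_n$ on the compact product $2^\omega\times 2^\omega$, which controls the uniform displacement of $\chi_{D_{x,y}}$ as $x$ is perturbed. Third, $\downarrow K_x$ contains every singleton by construction, so $\mathcal{I}_{K_x}\supseteq \mathsf{fin}$ automatically.

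What remains is the equivalence $\mathcal{I}_{K_x}$ is tall $\iff x\in Y'$. Since singletons contribute only finite subsets to any finite union of elements of $K_x$, an infinite set $B$ lies in $\mathcal{I}_{K_x}$ iff $B\subseteq^* D_{x,y_0}\cup\cdots\cup D_{x,y_k}$ for some $y_0,\ldots,y_k\in 2^\omega$. Tallness of $\mathcal{I}_{K_x}$ is therefore equivalent to: every infinite $A\subseteq\omega$ meets some $D_{x,y}$ in an infinite set, which is precisely the negation of ``$x\in Y$'' given by the universal-set characterisation. The principal technical hurdle is the Hausdorff-continuity of $x\mapsto K_x$; once that is in hand the combinatorial unpacking of tallness in terms of the $D_{x,y}$'s is routine.
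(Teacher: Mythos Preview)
Your proof is correct and follows essentially the same argument as the paper. The only cosmetic difference is that the paper first reduces to the set $\mathcal{T}'=\{K:\mathcal{I}_K\text{ is tall}\}$ via $\sigma(x)=\{\chi_{D_{x,y}}:y\in 2^\omega\}$ and then separately reduces $\mathcal{T}'$ to $\mathcal{T}$ by the continuous map $K\mapsto K\cup\{\chi_{\{n\}}:n\in\omega\}\cup\{\emptyset\}$, whereas you fold these two steps together by adjoining the singletons and $\emptyset$ to $K_x$ from the outset.
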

\begin{proof}
One can verify that $\mathcal{T}$ and $\mathcal{T}'=\{K:\mathcal{I}_K$ is tall$\}$ are $\Pi^1_2$. The map $F:K(2^\omega)\to K(2^\omega)$ defined as $F(K)=K\cup \bigcup_{n<\omega}\chi_{\{n\}}\cup \chi_{\emptyset}$ is continuous and it is a reduction from $\mathcal{T}'$ to $\mathcal{T}$. It is therefore enough to prove that $\mathcal{T}'$ is $\Pi^1_2$-complete.

To that end let $X\subseteq2^\omega$ be $\Sigma^1_2$.
By universality of $\mathcal U$, there is $(f_n)_{n\in \omega}\in C(2^\omega\times2^\omega,2)^\omega$ such that $\{x:((f_n)_{n\in \omega},x)\in \mathcal{U}=X$. 
Define a map $\sigma:2^\omega\to K(2^\omega)$ by $y\in \sigma(x)$ if and only if there is a $z\in 2^\omega$ such that $y(n)=f_n(x,z)$.
This function is clearly continuous.

Given $x\in X$, there is an $A\subseteq \omega$ such that $(f_n(x,\_))_{n\in A}$ converges pointwise to $0$, that is
 $|A\cap y|<\omega$ for every $y\in \sigma(x)$, hence $\sigma(x)\not\in \mathcal{T}'$.

On the other hand if $x\not\in X$ then for every $A\subseteq \omega$, $(f_n(x,\_))_{n\in A}$ does not converge pointwise to $0$.
Therefore there is a $y\in \sigma(x)$ such that $|A\cap y|=\omega$, so $\sigma(x)\in \mathcal{T}'$.
\end{proof}

\begin{proposition}
Let $\mathcal{J}$ be a Borel ideal on $\omega$. The set $\mathcal{J}_{\le_K}=\{K\in K(2^\omega):\mathcal{J}\le_K \mathcal{I}_K\}$ is $\Sigma^1_2$.
\end{proposition}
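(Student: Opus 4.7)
The plan is to expand the statement $K \in \mathcal{J}_{\leq_K}$ into a formula of the shape $\exists f \, \forall J \, \Phi(K, f, J)$ with $\Phi$ a Borel predicate, so that the complexity can be read off directly. By definition of the Kat\v etov order,
\[
K \in \mathcal{J}_{\leq_K} \;\iff\; \exists f \in \omega^\omega \; \forall J \in 2^\omega \; \bigl( J \notin \mathcal{J} \;\vee\; f^{-1}[J] \in \mathcal{I}_K \bigr),
\]
with the two bound variables ranging over standard Borel spaces. It therefore suffices to verify that the matrix inside the quantifiers is Borel in the three variables $(K, f, J) \in K(2^\omega) \times \omega^\omega \times 2^\omega$.

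Two of the three ingredients are immediate. The relation $J \notin \mathcal{J}$ is Borel in $J$ by the hypothesis that $\mathcal{J}$ is a Borel subset of $2^\omega$, and the map $(f, J) \mapsto f^{-1}[J]$ is continuous from $\omega^\omega \times 2^\omega$ into $2^\omega$. The only step that requires a small argument is that the membership relation $E = \{(K, A) \in K(2^\omega) \times 2^\omega : A \in \mathcal{I}_K\}$ is Borel. For this I would invoke the definition $\mathcal{I}_K = \bigcup_{n} \downarrow \langle K \rangle^n$ from Section~2: the continuity of $K \mapsto \downarrow \langle K \rangle^n$, which is recorded in the excerpt, combined with the standard fact that $\{(L, x) \in K(2^\omega) \times 2^\omega : x \in L\}$ is closed, shows that each slice $E_n = \{(K, A) : A \in \downarrow \langle K \rangle^n\}$ is closed in $K(2^\omega) \times 2^\omega$. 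Hence $E = \bigcup_n E_n$ is $F_\sigma$, in particular Borel.

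Substituting back, $\mathcal{J}_{\leq_K}$ is defined by a formula $\exists f \in \omega^\omega \; \forall J \in 2^\omega \; \Phi(K, f, J)$ with $\Phi$ Borel, and hence is $\Sigma^1_2$. I do not foresee any genuine obstacle here; the argument is essentially a quantifier count whose only substantive ingredient is the continuity of $K \mapsto \downarrow \langle K \rangle^n$ already supplied in the coding section.
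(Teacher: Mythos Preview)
Your argument is correct and is essentially the paper's own proof: both unfold the definition into an $\exists f\,\forall J$ formula over a Borel matrix, and both verify Borelness of that matrix using the continuity of $K\mapsto\downarrow\langle K\rangle^n$, the closedness of $\{(L,x):x\in L\}$, and the Borelness of $\mathcal{J}$. The only cosmetic difference is that the paper packages the $n$th level as a relation $R_n$ and then takes $\bigcup_n R_n$, whereas you take the union first to form $E=\bigcup_n E_n$; the content is identical.
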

\begin{proof}
Given $n\in\omega$, let $R_n\subseteq K(2^\omega)\times \omega^\omega\times \mathcal{P}(\omega)$ be the relation defined by $(K,f,A)\in R_n$ if and only if $A\in \mathcal{J} \implies f^{-1}[A] \in \downarrow \langle K\rangle^n$.

\begin{claim}
 $R_n$ is Borel for every $n\in\omega$.
\end{claim}
\begin{proof}
The map $\psi:K(2^\omega)\times \omega^\omega\times \mathcal{P}(\omega)\to K(2^\omega)\times \mathcal{P}(\omega)$ defined by 
$$\psi(K,f,A)=(\downarrow \langle K\rangle^n,f^{-1}[A])$$ %
is Borel and so is the set $M\subseteq K(2^\omega)\times \mathcal{P}(\omega)$ where $(K,A)\in M$ iff $A\in K$. Then $R_n=\psi^{-1}(M)\cup (K(2^\omega)\times \omega^\omega\times (\mathcal{P}(\omega)\setminus \mathcal{J}))$.
\end{proof}
Using the claim we have that

$$\mathcal{J}_{\le_K}=\{K\in K(2^\omega):\exists f\in \omega^\omega \forall A\in \mathcal{P}(\omega) \ (K,f,A)\in \bigcup_{n\in \omega} R_n\}$$
which is clearly $\Sigma^1_2$.
\end{proof}

\begin{theorem}\label{main-thm}
There is no minimal tall Borel  ideal in the Kat\v etov order $\le_K$.
\end{theorem}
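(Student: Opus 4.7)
The plan is to derive a contradiction from the assumption that some tall Borel ideal $\J$ is $\leq_K$-minimal among all tall Borel ideals, by exploiting the complexity gap between $\Pi^1_2$-complete and $\Sigma^1_2$ sets.

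First I would note that the two preceding propositions supply exactly the ingredients needed: the set $\mathcal{T}$ of codes of tall $F_\sigma$ ideals (containing $\mathsf{fin}$) is $\Pi^1_2$-complete, and for the specific assumed ideal $\J$ the set $\J_{\leq_K}=\{K\in K(2^\omega):\J\leq_K\I_K\}$ is $\Sigma^1_2$. Since a $\Pi^1_2$-complete set cannot be $\Sigma^1_2$, it suffices to represent $\mathcal{T}$ as a $\Sigma^1_2$ set. The natural candidate is
\[
\mathcal{T}\;=\;\mathcal{T}_0\cap \J_{\leq_K},
\]
where $\mathcal{T}_0=\{K\in K(2^\omega):\I_K\text{ is a proper ideal containing }\mathsf{fin}\}$. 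A direct check shows $\mathcal{T}_0$ is Borel (in fact $G_\delta$): the condition $\mathsf{fin}\subseteq\I_K$ is equivalent to the intersection over $n\in\omega$ of the clopen conditions $K\cap\{y:y(n)=1\}\neq\emptyset$, and properness unfolds analogously through the continuous maps $K\mapsto\downarrow\langle K\rangle^n$. Granted the displayed equality, $\mathcal{T}_0\cap\J_{\leq_K}$ is $\Sigma^1_2$, contradicting the $\Pi^1_2$-completeness of $\mathcal{T}$.

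The heart of the argument is therefore the displayed equality. The inclusion $\mathcal{T}\subseteq\mathcal{T}_0\cap\J_{\leq_K}$ is immediate: every $K\in\mathcal{T}$ lies in $\mathcal{T}_0$ by definition, and the Borel ideal $\I_K$ is tall, so by the assumed minimality of $\J$ we have $\J\leq_K\I_K$. The reverse inclusion is the only real content and reduces to the statement that \emph{tallness propagates upward under Kat\v etov reductions} for proper ideals containing $\mathsf{fin}$. Concretely, given $K\in\mathcal{T}_0$ and $f:\omega\to\omega$ witnessing $\J\leq_K\I_K$, for any infinite $A\subseteq\omega$ I split into two cases: if $f[A]$ is infinite, I use tallness of $\J$ to extract an infinite $B\in\J$ with $B\subseteq f[A]$, and then $A\cap f^{-1}[B]\subseteq f^{-1}[B]\in\I_K$ is an infinite subset of $A$ in $\I_K$; if $f[A]$ is finite, some fiber $A\cap f^{-1}(b)$ is infinite, and since $\{b\}\in\mathsf{fin}\subseteq\J$ we get $f^{-1}[\{b\}]\in\I_K$, yielding the required infinite subset.

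The main obstacle is really bookkeeping rather than conceptual: verifying carefully that $\mathcal{T}_0$ is Borel so that the intersection stays $\Sigma^1_2$, and making sure the backward inclusion genuinely relies only on properness of $\I_K$ and on $\mathsf{fin}\subseteq\J$, both of which are standing hypotheses. Once this complexity accounting is in place, the two propositions combine with the minimality assumption to collapse $\Pi^1_2$ to $\Sigma^1_2$ on a complete set, and Theorem \ref{main-thm} follows.
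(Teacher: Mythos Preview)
Your proof is correct and follows essentially the same route as the paper: assume a minimal tall Borel $\J$, identify $\mathcal T$ with (a Borel restriction of) $\J_{\le_K}$, and derive the $\Sigma^1_2$/$\Pi^1_2$-complete contradiction from the two preceding propositions. The paper simply asserts $\J_{\le_K}=\mathcal T$ outright, whereas you supply the tallness-propagation argument and intersect with the Borel set $\mathcal T_0$; this is extra care rather than a different idea.
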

\begin{proof}
If there were a minimal tall $\mathcal{J}$ in $\le_K$ then $\mathcal{J}_{\le_K}=\mathcal{T}$. Therefore $\mathcal{T}$ is simultaneously $\Sigma^1_2$, and $\Pi^1_2$-complete which is a contradiction.
\end{proof}

In fact, we have proved more:

\begin{corollary}\label{cor} 
For every tall analytic ideal $\mathcal{J}$ there is a $F_\sigma$ ideal $\mathcal{I}$ such that $\mathcal{J}\not\le_K \mathcal{I}$. 
\end{corollary}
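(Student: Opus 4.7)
My plan is to repeat the proof of Theorem \ref{main-thm} almost verbatim, observing that the hypothesis ``Borel'' on $\mathcal{J}$ is used there only to obtain the $\Sigma^1_2$ upper bound on $\mathcal{J}_{\le_K}$ from the preceding proposition, and that this bound survives when $\mathcal{J}$ is merely analytic. The other ingredients---in particular the equality $\mathcal{T}=\mathcal{J}_{\le_K}$---already appear in the proof of Theorem \ref{main-thm} and use nothing beyond tallness of $\mathcal{J}$ and $\mathsf{fin}\subseteq\mathcal{J}$.

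First I would redo the complexity calculation with $\mathcal{J}$ analytic. The only appeal to Borelness there was that $\mathcal{P}(\omega)\setminus\mathcal{J}$ is Borel, which made each $R_n$ Borel. If $\mathcal{J}$ is only analytic, then $\mathcal{P}(\omega)\setminus\mathcal{J}$ is coanalytic, so each $R_n$ becomes $\Pi^1_1$; countable unions and the universal quantifier over $A\in\mathcal{P}(\omega)$ both preserve $\Pi^1_1$, and the outer existential quantifier over $f\in\omega^\omega$ lifts the complexity to $\Sigma^1_2$. Hence $\mathcal{J}_{\le_K}$ is $\Sigma^1_2$ for every analytic ideal $\mathcal{J}$.

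Now assume for contradiction that $\mathcal{J}\le_K\mathcal{I}$ holds for every $F_\sigma$ ideal $\mathcal{I}$. For each $K\in\mathcal{T}$ the ideal $\mathcal{I}_K$ is $F_\sigma$, so $\mathcal{J}\le_K\mathcal{I}_K$ by assumption, giving $\mathcal{T}\subseteq\mathcal{J}_{\le_K}$. For the reverse inclusion, fix $K\in\mathcal{J}_{\le_K}$ and let $f:\omega\to\omega$ witness $\mathcal{J}\le_K\mathcal{I}_K$. For every $m$ one has $\{m\}\subseteq f^{-1}[\{f(m)\}]\in\mathcal{I}_K$ (since $\{f(m)\}\in\mathsf{fin}\subseteq\mathcal{J}$), so $\mathsf{fin}\subseteq\mathcal{I}_K$. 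For tallness, given infinite $A\subseteq\omega$, either some fiber $A\cap f^{-1}[\{n\}]$ is already an infinite subset of $A$ lying in $\mathcal{I}_K$, or $f[A]$ is infinite; in the second case, pick infinite $A'\subseteq A$ on which $f$ is injective, use tallness of $\mathcal{J}$ to find infinite $J'\subseteq f[A']$ with $J'\in\mathcal{J}$, and observe that $A'\cap f^{-1}[J']$ is an infinite subset of $A$ in $\mathcal{I}_K$. Hence $K\in\mathcal{T}$.

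Combining, $\mathcal{T}=\mathcal{J}_{\le_K}$ would be simultaneously $\Pi^1_2$-complete and $\Sigma^1_2$, contradicting $\Sigma^1_2\neq\Pi^1_2$. The only point requiring genuine attention is the complexity upper bound under the weakened hypothesis on $\mathcal{J}$; once it is in place, the rest merely recycles the argument of Theorem \ref{main-thm}.
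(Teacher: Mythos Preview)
Your proposal is correct and follows essentially the same route as the paper: show that $\mathcal{J}_{\le_K}$ is $\Sigma^1_2$ even when $\mathcal{J}$ is only analytic, and then invoke the $\Pi^1_2$-completeness of $\mathcal{T}$ exactly as in Theorem~\ref{main-thm}. The only cosmetic difference is that the paper obtains the $\Sigma^1_2$ bound by explicitly unfolding the analytic definition of $\mathcal{J}$ via a closed predicate $P(A,g)$ and carrying the extra variable $g$ so that the relations $R_n$ stay Borel, whereas you reach the same bound by noting that $R_n$ becomes $\Pi^1_1$ and using closure of $\Pi^1_1$ under countable unions and universal real quantifiers; these are two phrasings of the same computation.
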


\begin{proof} To see this let $P\subseteq \mathcal{P}(\omega)\times \omega^\omega$ be closed such that $A\in \mathcal{J}$ if there is $g\in \omega^\omega$ such that $P(A,g)$. Then as in Proposition3 we can define $R_n\subseteq K(2^\omega)\times \omega^\omega\times \mathcal{P}(\omega) \times \omega^\omega$ where $(K,f,A,g)\in R_n$ if $\neg P(A,g) \vee f^{-1}[A]\in \downarrow \langle K\rangle^n$. This is also Borel and 
$$\mathcal{J}_{\le_K}=\{K\in K(2^\omega):\exists f\in \omega^\omega \forall A\in \mathcal{P}(\omega) \forall g\in \omega^\omega \ (K,f,A,g)\in \bigcup_{n\in \omega} R_n\}$$
which is again $\Sigma^1_2$.
\end{proof}

\section{Ideals and Selectors}

In the previous section we proved that no tall Borel ideal can be minimal in the Kat\v etov order, but the reason was that the complexity of ``codes" of tall $F_\sigma$ ideals above it, is different from the complexity of all ``codes" of all $F_\sigma$ tall ideals. This can be seen as somewhat unsatisfactory.

In this section we offer a way how to construct for a given tall Borel ideal $\mathcal{J}$ a Borel ideal that is not above it under the assumption of existence of Borel selector - wittness of tallness for $\mathcal{J}$.

\begin{definition}
Let $\mathcal{J}$ be a tall Borel ideal on $\omega$. We say that $\mathcal{J}$ has a \emph{selector} if there is a Borel function $S:\mathcal{P}(\omega)\to\mathcal{P}(\omega)$ such that
\begin{itemize}
\item $S(A)\in\mathcal{J}$,
\item $|S(A)|=A$,
\item $S(A)\subseteq A$.
\end{itemize}
\end{definition}

The question whether every tall Borel ideal has a selector was raised by Uzcategui in \cite{uzcategui}.

\medskip

Let $\mathcal{C}$ be an ideal on $2^{<\omega}$ generated by branches and antichains. It is clear that the set of all branches and antichains is closed and therefore $\mathcal{C}$ is a tall $F_{\sigma}$ ideal. 

\begin{definition}
Given a Borel function  $c:2^{\omega}\to \omega^{\omega}$ we define $\mathcal{I}_c$ to be an ideal on $2^{<\omega}$ generated by
\begin{itemize}
\item antichains in $2^{<\omega}$,
\item $\{x\upharpoonright m: m\in c(x)^{-1}(n)\}$ for every $x\in 2^{\omega}$ and $n\in\omega$,
\item  $A\subseteq \{x\upharpoonright m: m\in\omega\}$ where $x\in 2^{\omega}$, and for every $n\in \omega$ is $|A\cap c(x)^{-1}(n)|=1$.
\end{itemize}
\end{definition}

The constructive version of Theorem \ref{main-thm} is the following:

\begin{theorem}\label{main-2}
Let $\mathcal{I}$ be a Borel ideal with a selector. Then there is a Borel function $c:2^\omega\to\omega^\omega$ such that $\mathcal{I}\not\le_K\mathcal{I}_c$. Moreover, every $\mathcal{I}_c$ is a tall Borel ideal.
\end{theorem}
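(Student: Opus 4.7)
My plan is to treat the ``moreover'' claim (tallness of $\mathcal{I}_c$ for every Borel $c$) and the main claim ($\mathcal{I}\not\leq_K\mathcal{I}_c$) separately, since the first is a Ramsey-style pigeonhole while the second requires a selector-based diagonal construction of a specific $c$.

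For tallness, I would apply the infinite Ramsey theorem to the $2$-coloring of $[B]^2$ by comparability in $2^{<\omega}$: any infinite $B\subseteq 2^{<\omega}$ contains an infinite homogeneous set which is either an antichain or a chain. An infinite antichain is already a generator of $\mathcal{I}_c$ of the first kind and we are done. An infinite chain has the form $\{x\upharpoonright m:m\in I\}$ for some $x\in 2^\omega$ and infinite $I\subseteq\omega$, and a further pigeonhole on $c(x)\upharpoonright I$ yields either an infinite $c(x)$-monochromatic subset (a generator of the second kind) or an infinite subset meeting each $c(x)$-fiber at most once (a generator of the third kind, a transversal). Borel-ness of $\mathcal{I}_c$ follows from the characterization: $Y\in\mathcal{I}_c$ iff $Y\in\mathcal{C}$ and, for each of the finitely many branches $x$ such that $Y\cap\{x\upharpoonright m:m\in\omega\}$ is infinite, the restriction of $Y$ to that branch lies in the Borel-in-$x$ ``fiber-boundedness'' ideal $\mathcal{J}_{c(x)}$; both clauses are Borel.

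For the main claim I would construct $c$ via a fixed Borel bijection $\psi:2^\omega\to\omega^\omega$, declaring $c(x)^{-1}(n)$ to be the $n$th consecutive interval of $\omega$ of length $\psi(x)(n)+1$. Then $\mathcal{J}_{c(x)}$ is exactly the ideal of $D\subseteq\omega$ with $|D\cap c(x)^{-1}(n)|$ eventually bounded in $n$. If $f:2^{<\omega}\to\omega$ witnesses $\mathcal{I}\leq_K\mathcal{I}_c$, then for every $x\in 2^\omega$ and every $A\in\mathcal{I}$ one has $g_x^{-1}[A]\in\mathcal{J}_{c(x)}$, where $g_x(m)=f(x\upharpoonright m)$. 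I would build a specific $x^\star\in 2^\omega$ stagewise: at stage $n$, the values $\psi(x^\star)(0),\ldots,\psi(x^\star)(n-1)$ already determine $g_{x^\star}$ on the first $n$ blocks, and I choose $\psi(x^\star)(n)$ large enough to capture at least $n$ indices of that block whose $g_{x^\star}$-images all fall into a target set designed to interact well with the selector. From the stagewise targets I would amalgamate an infinite $B\subseteq\omega$, and a single application $A=S(B)\in\mathcal{I}$ would yield $|g_{x^\star}^{-1}[A]\cap c(x^\star)^{-1}(n)|\to\infty$, contradicting $g_{x^\star}^{-1}[A]\in\mathcal{J}_{c(x^\star)}$.

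The main obstacle I expect is producing a single $A\in\mathcal{I}$ by one application of the selector (rather than a sequence $A_n$ built stagewise) that simultaneously witnesses failure of bounded block-intersection for infinitely many $n$. The stagewise construction has to be engineered so that $S$, applied to the globally assembled $B$, retains enough of each stage's contribution; in essence this is a diagonal fixed-point coordination mediated by the Borel selector, and that coordination between the recursive choice of $\psi(x^\star)$ and the one-shot use of $S$ is the most delicate step.
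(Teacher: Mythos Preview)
Your treatment of tallness and Borelness of $\mathcal{I}_c$ is essentially the same as the paper's (Ramsey on pairs for the chain/antichain dichotomy, then a pigeonhole on the fibre structure of $c(x)$; Borelness via a Borel decomposition of elements of $\mathcal{C}$ into finitely many chains and antichains).

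The main construction, however, has a genuine gap, and it is exactly the one you flag in your last paragraph. With your choice $c(x)^{-1}(n)=$ ``$n$th interval of length $\psi(x)(n)+1$'' the function $c$ is defined \emph{without any reference to the selector $S$}; the selector is only invoked once, at the very end, on some set $B$ assembled along the recursion. But $S(B)$ is just \emph{some} infinite subset of $B$ in $\mathcal{I}$: you have no control over which elements of $B$ survive, so there is no reason $g_{x^\star}^{-1}[S(B)]$ should meet infinitely many blocks in unbounded size. Concretely, if $f$ is a bijection $2^{<\omega}\to\omega$ then $g_{x^\star}$ is injective, each block contributes a finite packet to $B$, and $S(B)$ may pick at most one point from each packet. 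The ``coordination'' you hope for simply does not follow from the axioms of a selector.

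The paper avoids this obstacle by building the selector \emph{into the definition of $c$}. It fixes a Borel bijection $B\colon 2^\omega\to\omega^{2^{<\omega}}$ (so each $x$ encodes a potential Kat\v{e}tov witness $f=B(x)$, rather than a sequence of block lengths), applies $S$ to the $f$-image of the branch through $x$ to get $A_x\in\mathcal{I}$, lets $\Theta(x)\subseteq\omega$ be the set of levels $n$ with $f(x\upharpoonright n)\in A_x$, and then chooses $c(x)=H(\Theta(x))$ for a Borel $H\colon[\omega]^\omega\to\omega^\omega$ with finite fibres and $|\Theta(x)\cap H(\Theta(x))^{-1}(n)|\to\infty$. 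Now, given any $f$, the unique $x$ with $B(x)=f$ automatically satisfies that $\{x\upharpoonright n:n\in\Theta(x)\}\subseteq f^{-1}[A_x]$ has unbounded intersection with the $c(x)$-fibres, hence $f^{-1}[A_x]\notin\mathcal{I}_c$. No recursive construction of $x^\star$ and no after-the-fact coordination is needed: the diagonalisation and the single use of $S$ happen simultaneously inside the definition of $c$. That is the missing idea.
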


\begin{fact} \cite[Exercise 18.4]{kechristeoriadescriptiva}
The set $IF_f$ of all finitely splitting trees on $\omega$ is Borel and the map $IF_f\to \omega^{\omega}$ which assigns to a tree $T$ its left-most branch $a_T$ is Borel. 
\end{fact}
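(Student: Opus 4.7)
The plan is to work in the Polish space $2^{\omega^{<\omega}}$ of subsets of $\omega^{<\omega}$, identified with the Cantor space. First, observe that $T$ being a tree (closed under initial segments) is a countable intersection of clopen conditions, hence closed, and that $T$ being infinite---$\forall n\, \exists s \in T\, (|s| = n)$---is $\Pi^0_2$. For the finite-splitting condition, for each fixed $s \in \omega^{<\omega}$ I would rewrite the statement ``$s \in T$ implies $\{n : s^\frown \langle n \rangle \in T\}$ is finite'' as
$$s \notin T \;\vee\; \exists N\, \forall n \geq N\, (s^\frown \langle n \rangle \notin T),$$
which is $\Sigma^0_2$ (a clopen set unioned with a $\Sigma^0_2$ set). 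Taking the conjunction over all $s \in \omega^{<\omega}$ gives a $\Pi^0_3$ condition, so $IF_f$ is Borel.

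For the left-most branch map, the definition is recursive. By K\"onig's lemma every infinite finitely splitting tree has an infinite branch, and among all such branches there is a unique lexicographically least one, namely
$$a_T(k) = \min\{n \in \omega : T \text{ has an infinite branch extending } (a_T \upharpoonright k)^\frown \langle n \rangle\}.$$
The key observation, which lets one stay in the Borel hierarchy, is that for finitely splitting $T$ the predicate ``$T$ has an infinite branch through $s$'' is, by K\"onig's lemma, equivalent to ``$\{t \in T : s \preceq t\}$ is infinite'', i.e.\ $\forall N\, \exists t \in T\, (s \preceq t \wedge |t| \geq N)$, which is $\Pi^0_2$ in $T$. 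By induction on $k$, I would then show that the set $\{T : a_T \upharpoonright (k+1) = \sigma\}$ is Borel for each $\sigma \in \omega^{k+1}$: it is the intersection of the condition that $\sigma$ extends to an infinite branch in $T$ with, for every $j \leq k$, the condition that no $m < \sigma(j)$ would have extended $\sigma\upharpoonright j$ to the prefix of an infinite branch of $T$. Each clause is a Boolean combination of the $\Pi^0_2$ predicate above, hence Borel.

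Since every coordinate function $T \mapsto a_T(k)$ is then Borel, the map $T \mapsto a_T$ into $\omega^\omega$ is Borel. The only substantive obstacle is a conceptual one: the finite-splitting hypothesis is essential, because without it the condition ``$T$ has an infinite branch through $s$'' is properly $\Sigma^1_1$ (this is precisely what makes the general set $IF$ analytic and not Borel), and the recursive definition of $a_T$ would no longer produce a Borel function. Finite splitting collapses the analytic quantifier over branches to the arithmetical statement ``$T_s$ is infinite'' via K\"onig's lemma, and this collapse is exactly what keeps the entire construction Borel.
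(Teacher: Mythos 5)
The paper states this as a Fact citing \cite[Exercise 18.4]{kechristeoriadescriptiva} and provides no proof, so there is nothing to compare against; your argument is the standard solution to that exercise and is correct. You correctly identify the crux: K\"onig's lemma converts the a priori $\Sigma^1_1$ predicate ``$T$ has an infinite branch through $s$'' into the arithmetical ``$\{t\in T: s\sqsubseteq t\}$ is infinite'', which is $\Pi^0_2$, and the rest (Borelness of $IF_f$ as a $\Pi^0_3$ set, and Borelness of each coordinate $T\mapsto a_T(k)$ via finite Boolean combinations of that $\Pi^0_2$ predicate) follows routinely.
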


\begin{lemma}
Given $K\in K(2^\omega)$, there is a Borel function $\phi:\mathcal{I}_K \to K^{<\omega}$ such that $x\subseteq\phi(x)$.
\end{lemma}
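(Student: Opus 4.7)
The plan is to reduce the construction of $\phi$ to the cited Fact about leftmost branches of finitely splitting trees, applied uniformly in $x$. First, I would partition $\mathcal{I}_K$ into the Borel pieces $A_n=\downarrow\langle K\rangle^n\setminus\downarrow\langle K\rangle^{n-1}$ (setting $\downarrow\langle K\rangle^{-1}=\emptyset$). Each $\downarrow\langle K\rangle^n$ is closed, since it is the image of the compact set $K^{n+1}$ under $(y_0,\dots,y_n)\mapsto y_0\cup\cdots\cup y_n$ followed by downward closure, and the relation $x\subseteq y$ is closed in $2^\omega\times 2^\omega$ with a compact second coordinate. Thus each $A_n$ is Borel, and it suffices to define $\phi$ Borel-measurably on each $A_n$ separately (and then glue).

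Fix $n$ and identify $(2^\omega)^{n+1}$ with $(2^{n+1})^\omega$ by coordinate interleaving. Since $K^{n+1}$ is compact in $(2^\omega)^{n+1}$, there is a fixed finitely branching tree $T\subseteq(2^{n+1})^{<\omega}$ whose body $[T]$ corresponds to $K^{n+1}$. For $x\in 2^\omega$ define
\[
T_x=\bigl\{s\in T:\ \text{for every }j<|s|\text{ with }x(j)=1,\ \text{some coordinate of }s(j)\in 2^{n+1}\text{ equals }1\bigr\}.
\]
Then $T_x$ is a subtree of $T$, hence finitely branching, and $x\in A_n$ is exactly the assertion that $[T_x]\neq\emptyset$, i.e., that $T_x$ is infinite. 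The assignment $x\mapsto T_x$ is moreover continuous, since membership of a single finite sequence $s\in T$ in $T_x$ depends only on finitely many values of $x$.

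Now apply the cited Fact: the set of finitely splitting trees is Borel, and the map sending such a tree to its leftmost branch is Borel. Composing, $x\mapsto a_{T_x}\in (2^{n+1})^\omega\cong K^{n+1}$ is Borel on $A_n$; unpacking the interleaving gives a Borel map $\phi_n\colon A_n\to K^{n+1}\subseteq K^{<\omega}$ such that, by construction of $T_x$, every bit of $x$ that equals $1$ is covered by at least one coordinate of $\phi_n(x)$, i.e.\ $x\subseteq \bigcup\phi_n(x)$. The desired $\phi$ is the union of the $\phi_n$'s over the Borel partition $\{A_n\}_n$.

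The only real subtlety is the uniform finite-branching needed to invoke the Fact: one must make sure that $T_x$, as a tree on $\omega$ (via any labeling of $2^{n+1}$), is finitely branching and depends Borel-measurably on $x$. Both hold because $T_x$ is a subtree of the fixed finitely branching tree $T$, so the per-node branching bound is independent of $x$, and $x\mapsto T_x$ is continuous. Modulo this bookkeeping, the construction is immediate.
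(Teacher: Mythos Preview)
Your proof is correct and takes essentially the same approach as the paper: for each $n$, build a finitely branching tree whose infinite branches encode tuples from $K$ covering $x$, note the continuous dependence on $x$, and apply the leftmost-branch Fact. The only differences are cosmetic (you interleave coordinates to work over the finite alphabet $2^{n+1}$, whereas the paper works directly with $n$-tuples in $(2^{<\omega})^n$), together with one harmless slip: $[T_x]\neq\emptyset$ actually characterises $x\in\downarrow\langle K\rangle^n$ rather than $x\in A_n$, but only that implication is needed on $A_n$.
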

\begin{proof}
Let $x\in 2^\omega$ and $n\in \omega$. We define a tree $T_x\subseteq (2^{<\omega})^n$ where $\langle s_1,...s_n\rangle\in T_x$ if $(x\cap |s_1|)\setminus\bigcup_{i\le n} s_i=\emptyset$ and there are $y_1,...y_n\in K$ such that $s_i\sqsubseteq y_i$. The tree $T_x$ is then finitely splitting  (every node branches to at most $2^n$ nodes) and it is ill-founded if and only if there is $y_1,...y_n\in K$ such that $x\subseteq \bigcup_{i\le n} y_i$. Moreover, the assignment $x\to T_x$ is continuous and since $\mathcal{I}_K=\bigcup_{n\in\omega}\downarrow\langle K\rangle^n$ we may use the previous fact to obtain the desired function $\phi$.
\end{proof}

\begin{lemma}
For every $c:2^{\omega}\to \omega^{\omega}$ is $\mathcal{I}_c$ a tall Borel ideal.
\end{lemma}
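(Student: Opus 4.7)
The plan is to prove tallness and Borelness separately, with Borelness being the substantive part. For tallness, let $B\subseteq 2^{<\omega}$ be infinite. Either $B$ contains an infinite antichain, which is itself a type-1 generator of $\mathcal{I}_c$, or by K\"onig's lemma applied to the finitely branching tree $\downarrow B$ there exist $x\in 2^\omega$ and an infinite $M\subseteq \omega$ with $\{x\upharpoonright m : m\in M\}\subseteq B$. Partition $M$ by the fibres of $c(x)$: if some $M\cap c(x)^{-1}(n)$ is infinite, it yields an infinite subset of a type-2 generator; otherwise every fibre intersection is finite, so $M$ meets infinitely many fibres, and picking one representative from each non-empty intersection yields an infinite subset of a type-3 generator.

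For Borelness, the strategy is to exhibit $\mathcal{I}_c$ as the intersection of two Borel sets. Let $\mathcal{C}$ denote the ideal on $2^{<\omega}$ generated by antichains and full branches $\{y\upharpoonright m : m\in\omega\}$. A standard lower-semicontinuous submeasure argument shows $\mathcal{C}$ is $F_\sigma$: setting $\psi(A)$ to be the minimum cardinality of a cover of $A$ by such generators, lower semi-continuity follows because the set of antichains is closed in $2^{2^{<\omega}}$ and the branches form the continuous image of the compact space $2^\omega$, so partial covers of longer and longer initial segments of $A$ admit a convergent subsequence yielding an actual cover of $A$. Since every $\mathcal{I}_c$-generator is an antichain or a subset of a single branch, we have $\mathcal{I}_c\subseteq \mathcal{C}$.

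For $B\in\mathcal{C}$ the closed set $X_B=[\downarrow B]\subseteq 2^\omega$ of infinite branches of $B$ is forced to be finite (any $x\in X_B$ demands a full-branch generator through $x$ in every $\mathcal{C}$-cover of $B$), and from the Borel pruned subtree of $\downarrow B$ one may Borel-enumerate $X_B=\{x_1(B),\ldots,x_{N(B)}(B)\}$ in, say, lexicographic order. Define the Borel set $\mathcal{D}$ to consist of those $B$ for which $X_B$ is finite and, for each $i\leq N(B)$, setting $M_i=\{m : x_i(B)\upharpoonright m\in B\}$, there exist $r\in\omega$ and a finite $F\subseteq \omega$ such that $|M_i\cap c(x_i(B))^{-1}(n)|\leq r$ for every $n\notin F$. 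The identity $\mathcal{I}_c=\mathcal{C}\cap\mathcal{D}$ then follows by translating between (i) a finite generator cover of $B$, in which the off-branch remainder is automatically a finite union of antichains and each on-branch chain decomposes into finitely many monochromatic pieces (giving the exceptional set $F$) plus a bounded-fibre remainder (giving the bound $r$, pigeon-holed into $r$ rainbow pieces), and (ii) conversely building such a generator cover from these decompositions.

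The main obstacle is the careful verification of the equivalence $\mathcal{I}_c=\mathcal{C}\cap\mathcal{D}$, chiefly the forward direction, where one must argue that any cover of an infinite on-branch chain by $\mathcal{I}_c$-generators reduces, modulo finitely many elements, to monochromatic and rainbow pieces on the same branch; this is combined with the routine but slightly technical Borel-enumerability of the finite closed set $X_B\subseteq 2^\omega$ from $B$. Tallness itself is essentially immediate from the dichotomy above.
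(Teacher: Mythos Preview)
Your tallness argument is correct (and more explicit than the paper, which simply asserts it). For Borelness, your overall strategy parallels the paper's: both reduce to the ambient $F_\sigma$ ideal $\mathcal{C}$ and then test, branch by branch, whether the on-branch part of $B$ satisfies a bounded-fibre condition with finitely many exceptional colours. The paper carries out the branch extraction via its preceding lemma, which supplies a Borel map $\phi:\mathcal{C}\to\mathbb{B}^{<\omega}\times\mathbb{A}^{<\omega}$ decomposing each $A\in\mathcal{C}$ into finitely many chains and antichains; it then expresses $\mathcal{I}_c$ as a countable union of Borel sets built from $\phi$ and auxiliary continuous maps $\varphi_k$, $\eta_n$, $\Gamma$.

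There is, however, a genuine gap in your branch-extraction step. You set $X_B=[\downarrow B]$ and assert it is finite for $B\in\mathcal{C}$, reasoning that each $x\in X_B$ forces a full-branch generator through $x$ in any $\mathcal{C}$-cover of $B$. Both the claim and the reasoning fail. Take the single antichain $B=\{0^n10^m1:n,m\in\omega\}\in\mathcal{C}$: no branch generator is needed to cover it, yet $[\downarrow B]\supseteq\{0^\omega\}\cup\{0^n10^\omega:n\in\omega\}$ is infinite. What \emph{is} finite for $B\in\mathcal{C}$ is the set $X_B'=\{x:\{m:x\upharpoonright m\in B\}\text{ is infinite}\}$, by exactly your parenthetical argument applied to the correct object; but $X_B'$ is a $\Pi^0_2$ condition in $x$, not the body of the pruned tree $\downarrow B$, so its Borel enumerability from $B$ is not immediate and requires an argument of roughly the strength of the paper's decomposition lemma. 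Once that is supplied (or once one simply imports $\phi$), the remainder of your outline goes through.
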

\begin{proof}
Let us first note that $\mathcal{I}_c\subseteq \mathcal{C}$, and that each $\mathcal{I}_c$ is tall. Denote the set of all chains and antichains in $2^{<\omega}$ as $\mathbb{B}$ and $\mathbb{A}$ respectively.
\begin{claim}
There is a Borel function $\phi:\mathcal{C}\to \mathbb{B}^{<\omega}\times \mathbb{A}^{<\omega}$ such that $A\subseteq \bigcup\phi(A)$ for every $A\in \mathcal{C}$.
\end{claim}
\begin{proof}
Use the previous lemma. 
\end{proof}

\begin{claim}
For every $k\in\omega$ there is a continuous function $\varphi_k:\omega^\omega\times 2^\omega\to 2^\omega$ such that $\varphi_k(f,x)\subseteq x$ and for all $n\in\omega$ is $|f^{-1}(n)\cap \varphi_k(f,x)|\le k$. 
\end{claim}
\begin{proof}
Let us define the function $\varphi_k$ such that $\varphi_k(f,x)\cap f^{-1}(n)$ are the first $k$ or less elements of $x\cap f^{-1}(n)$ depending on the size of the intersection.
\end{proof}

\begin{claim}
For every $n\in\omega$ the function $\eta_n:\omega^\omega\to 2^\omega$ such that $\eta_n(f)=f^{-1}(\{1,...,n\})$ is continuous.
\end{claim}

\begin{claim}
The function $\Gamma:2^\omega\times \mathcal{P}(\omega)\to \mathcal{P}(2^{<\omega})$ defined as $\Gamma(x,A)=\{s:s\sqsubseteq x,|s|\in A\}$ is continuous.
\end{claim}

Let us dentote as $\phi_0$ and $\phi_1$ the composition of $\phi$ and the projection to $\mathbb{B}^{<\omega}$ and $\mathbb{A}^{<\omega}$ respectively. Let $A\in\mathcal{C}$ and denote $|\phi_0(A)|=l(A)$ then $A\in \mathcal{I}_c$ iff there is $n\in \omega$ such that 

$$A\subseteq \bigcup \phi_1(A)\cup\bigcup_{i<l(A)} \Gamma(\phi_0(A)_i,(\varphi_{n}(c(\phi_0(A)_i),A\cap \phi_0(A)_i)\cup \eta_n(c(\phi_0(A)_i)))).$$
As the right hand side is for fixed $n\in\omega$ clearly a Borel function from $\mathcal{C}$ to $\mathcal{I}_c$ and $\subseteq$ is a closed relation we see that $\mathcal{I}_c$ is Borel as desired.
\end{proof}

\begin{proof}[Proof of Theorem \ref{main-2}]
Let us fix some Borel bijection $B:2^\omega\to \omega^{(2^{<\omega)}}$. Then 
$$\Theta(x):=\{|x\upharpoonright n|:n\in S(B(x)(\{x\upharpoonright n:n\in \omega\}))\}$$
is an infinite set because we may assume that $S\upharpoonright \mathcal{I}=id$ and the assigment is Borel. Now fix any Borel map $H:[\omega]^\omega\to \omega^\omega$ such that $|H(A)^{-1}(n)|<\omega$ and 
$$\lim_n |A\cap H(A)^{-1}(n)|=\infty.$$
Finally put $c(x):=H(\Theta(x))$.

We show that this works. Let $f:2^{\omega}\to \omega$ is given. We can find $x\in 2^{\omega}$ such that $B(x)=f$. Then 
$$\{x\upharpoonright n:n\in S(B(x)(\{x\upharpoonright n:n\in \omega\}))\}\not\in \mathcal{I}_c$$
due to definition of $\mathcal{I}_c$ and $c(x)$. But $S(B(x)(\{x\upharpoonright n:n\in \omega\}))\in \mathcal{I}$ and since $f$ was arbitrary we have that $\mathcal{I}\not\le_K\mathcal{I}_c$.
\end{proof}

\begin{corollary}
If a Borel ideal $\mathcal{I}$ is $\le_K$ smaller then all ideals of the form $\mathcal{I}_c$ then $\mathcal{I}$ does not have a selector.
\end{corollary}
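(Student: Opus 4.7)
The plan is to derive this corollary as the immediate contrapositive of Theorem~\ref{main-2}. Suppose, toward a contradiction, that a Borel ideal $\mathcal{I}$ is $\le_K$-below every ideal of the form $\mathcal{I}_c$ \emph{and} that $\mathcal{I}$ has a selector. Apply Theorem~\ref{main-2} to the selector: it produces a specific Borel function $c_0:2^\omega\to\omega^\omega$ such that $\mathcal{I}\not\le_K\mathcal{I}_{c_0}$, and the moreover clause guarantees that $\mathcal{I}_{c_0}$ is a (tall) Borel ideal, so it is a legitimate member of the family against which $\mathcal{I}$ was assumed to compare. This directly contradicts the hypothesis that $\mathcal{I}\le_K\mathcal{I}_c$ for every Borel $c$, whence $\mathcal{I}$ cannot have a selector.

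Concretely, the one-line proof I would write is: if $\mathcal{I}$ had a selector, Theorem~\ref{main-2} would supply a Borel $c$ with $\mathcal{I}\not\le_K\mathcal{I}_c$, contradicting the assumption that $\mathcal{I}\le_K\mathcal{I}_c$ for all such $c$.

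There is no genuine obstacle at this stage: all the technical content — the construction of $c$ from $S$ via the Borel bijection $B:2^\omega\to\omega^{(2^{<\omega})}$, the auxiliary set $\Theta(x)$, the map $H$ with growing fibers, and the verification that $\mathcal{I}_c$ is a tall Borel ideal — has already been carried out in the proof of Theorem~\ref{main-2}. The corollary is a purely formal restatement, so the ``proof'' is merely the invocation of the contrapositive.
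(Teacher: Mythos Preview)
Your proposal is correct and matches the paper's treatment: the paper states this corollary without proof, as it is exactly the contrapositive of Theorem~\ref{main-2}. Your one-line argument is precisely the intended justification.
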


Note that the construction in Theorem \ref{main-2} is not optimal, as the ideal $\mathcal I_c$ is very rarely $F_\sigma$, while by Corollary \ref{cor} we know that there is an $F_\sigma$ example.

It can be easily verified that having a selector is upward-closed under $\le_K$. It was observed by Uzcategui \cite{uzcategui} that the proof of the infinite Ramsey theorem gives us that the Random ideal  $\mathcal{R}$ (see \cite{hrusak-borel}), and hence also all ideals Kat\v etov above it, has a selector.

Next we prove that also the ideal introduced by Solecki in \cite{solecki} (for which it is an open problem \cite{hrusak-borel} whether it is Kat\v etov above $\mathcal{R}$) has a Borel selector. The \emph{Solecki ideal} $\mathcal{S}$ is the ideal on the countable set $\Omega:=\{D\subseteq 2^\omega:\mu(D)=\frac{1}{2} \ \& \ D$ is clopen$\}$, where $\mu$ is the product measure. The ideal $\mathcal{S}$ is generated by the sets of the form $\{D\in \Omega:x\in D\}$ where $x\in 2^\omega$. It is known
(see \cite[Lemma 3.3]{hernandez-hernandez-hrusak}) that for any infinite $X\subseteq \omega$ is $\mu(C_X)\ge \frac{1}{2}$ where $C_X:=\{x\in 2^\omega:|\{D\in X:x\in D\}|=\omega\}$.

We shall use the following coding of $G_\delta$ sets. Let $A\subseteq 2^{<\omega}$ and define 
$$X_A=\{x\in 2^\omega:|\{s\in A:x\upharpoonright|s|=s\}|=\omega\}.$$
It can be verified that each $X_A$ is $G_\delta$, and that each $G_\delta$ can be coded in such a way (see e.g.\cite{hrusak-zapletal}). Moreover, we have the following.

\begin{claim}
The map $\varphi_n:\mathcal{P}(2^{<\omega})\to \mathcal{P}(2^{<\omega})$ is continuous where $\varphi_n(A)$ is maximal antichain with the property that for every $s\in \varphi_n(A)$ is $|\{t\in A:t\le s\}|=n-1$.
\end{claim}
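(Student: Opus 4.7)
The plan is to unravel $\varphi_n(A)$ into an explicit local condition on $A$ and then read off continuity from the observation that membership depends on only finitely many bits of the code $A$.

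First I would pin down the intended description of $\varphi_n(A)$: a string $s\in 2^{<\omega}$ belongs to $\varphi_n(A)$ precisely when
\[
|\{t\in A:t\sqsubseteq s\}|=n-1\quad\text{and}\quad |\{t\in A:t\sqsubsetneq s\}|<n-1,
\]
i.e.\ $s$ is the first initial segment along its own branch at which the counter of $A$-predecessors reaches $n-1$. Two comparable such strings would force the longer one to have strictly more $A$-predecessors than $n-1$, contradicting equality, so the resulting set is automatically an antichain. It is maximal among antichains having the stated property because any $s'$ with $|\{t\in A:t\sqsubseteq s'\}|=n-1$ must extend the unique member of $\varphi_n(A)$ that is an initial segment of $s'$.

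The central observation is now purely combinatorial. Both displayed conditions refer only to the values $A(t)$ with $t\sqsubseteq s$, of which there are exactly $|s|+1$. Consequently, for every fixed $s$ the set
\[
U_s=\{A\in\mathcal{P}(2^{<\omega}):s\in\varphi_n(A)\}
\]
is clopen in the product topology on $2^{2^{<\omega}}\cong\mathcal{P}(2^{<\omega})$, since it is a finite Boolean combination of the coordinate clopen sets $\{A:t\in A\}$ and $\{A:t\notin A\}$ for $t\sqsubseteq s$.

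To finish I would invoke the standard fact that a map $F:\mathcal{P}(2^{<\omega})\to\mathcal{P}(2^{<\omega})$ is continuous if and only if for every $s\in 2^{<\omega}$ the set $F^{-1}(\{B:s\in B\})$ is open. Applied to $F=\varphi_n$, this preimage is exactly $U_s$, which is clopen by the previous step, so $\varphi_n$ is continuous. There is no real obstacle in this argument; the only genuinely thought-requiring step is to identify the correct reading of ``maximal antichain with the property'', after which continuity is an immediate consequence of locality.
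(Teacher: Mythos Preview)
The paper states this claim without proof, treating it as evident. Your argument is correct and is precisely the routine verification the paper omits: once $\varphi_n(A)$ is read as the set of $\sqsubseteq$-minimal $s$ with $|\{t\in A:t\sqsubseteq s\}|=n-1$, membership of a fixed $s$ depends only on the finitely many coordinates $A(t)$ for $t\sqsubseteq s$, so each coordinate preimage is clopen and continuity follows.
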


\begin{claim}
Let $\mathbb{A}\subseteq \mathcal{P}(2^{<\omega})$ be the set of all antichains. The map $\mu:\mathbb{A}\to [0,1]$ defined by $\mu(A)=\mu(\{x:\exists s\in A, s\le x\})$ is Borel.
\end{claim}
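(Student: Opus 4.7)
The plan is to write $\mu$ as a pointwise supremum of countably many continuous functions on $\mathbb{A}$, and conclude Borel-ness from that.

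The first step would be to convert the measure-theoretic quantity into a purely combinatorial series. For an antichain $A$, the basic clopen cylinders $[s]=\{x\in 2^\omega:s\sqsubseteq x\}$ with $s\in A$ are pairwise disjoint, so by countable additivity
$$\mu(A)=\mu\Bigl(\bigsqcup_{s\in A}[s]\Bigr)=\sum_{s\in A}2^{-|s|}.$$

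Identifying $\mathcal{P}(2^{<\omega})$ with $2^{2^{<\omega}}$ and noting that each coordinate map $A\mapsto \chi_A(s)$ is continuous in the product topology, I would set, for $n\in\omega$,
$$\mu_n(A)=\sum_{|s|\le n}\chi_A(s)\cdot 2^{-|s|}.$$
This is a finite sum of continuous functions, hence continuous on all of $\mathcal{P}(2^{<\omega})$. The sequence $(\mu_n(A))_n$ is nondecreasing and converges to $\mu(A)$ for every $A\in \mathbb{A}$, so $\mu=\sup_n\mu_n$ on $\mathbb{A}$ is a countable supremum of continuous functions, and therefore Borel (in fact of Baire class one).

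I do not expect a serious obstacle. The antichain hypothesis is used only to guarantee that the cylinders $[s]$, $s\in A$, are pairwise disjoint so that the measure becomes a series; after this reduction, the Borel-ness is a routine consequence of the standard fact that pointwise suprema of countably many continuous real-valued functions on a Polish space are Borel.
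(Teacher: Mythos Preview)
Your proof is correct and essentially the same as the paper's: both rest on the observation that for an antichain $A$ the cylinders $[s]$, $s\in A$, are disjoint so that $\mu(A)=\sum_{s\in A}2^{-|s|}$ is approximated from below by finite partial sums depending only on finitely many coordinates. The paper phrases the conclusion as ``$\mu^{-1}((a,1])$ is open for every $a$'' (i.e.\ lower semicontinuity), while you phrase it as ``$\mu=\sup_n\mu_n$ with each $\mu_n$ continuous''; these are equivalent formulations of the same fact.
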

\begin{proof}
It is easy to check that $\mu^{-1}((a,1])$ is open for every $a\in [0,1]$.
\end{proof}

\begin{claim}
Let $\epsilon>0$.
There is a Borel map $\rho_{\epsilon}:\mathbb{A}\to\mathbb{A}$ such that $|\rho(A)|<\omega$, $\mu(\{x:\exists s\in A\setminus \rho_{\epsilon}(A),s\le x\})<\epsilon$ and $\rho_{\epsilon}(A)\subseteq A$.
\end{claim}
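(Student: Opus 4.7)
The plan is to choose $\rho_\epsilon(A)$ as the intersection of $A$ with a long-enough initial segment of a fixed enumeration of $2^{<\omega}$, with the cutoff determined so that the leftover tail has $\mu$-measure below $\epsilon$; the Borelness will then reduce entirely to the previous claim.

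Concretely, fix once and for all an enumeration $(s_i)_{i<\omega}$ of $2^{<\omega}$ and, for each $n\in\omega$ and $A\in\mathbb{A}$, set $A_n:=A\cap\{s_0,\dots,s_{n-1}\}$, which is a finite antichain contained in $A$. Because $A$ is an antichain, the basic clopen cylinders $\{[s]:s\in A\}$ are pairwise disjoint, so
$$\mu(A)=\sum_{s\in A}2^{-|s|}\le 1,$$
and therefore $\mu(A\setminus A_n)=\sum_{s\in A\setminus A_n}2^{-|s|}$ tends to $0$ as $n\to\infty$. Consequently the map $N:\mathbb{A}\to\omega$ defined by
$$N(A):=\min\{n\in\omega:\mu(A\setminus A_n)<\epsilon\}$$
is everywhere defined on $\mathbb{A}$, and I take $\rho_\epsilon(A):=A_{N(A)}$. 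By construction $\rho_\epsilon(A)\subseteq A$ is finite and the tail satisfies $\mu(\{x:\exists s\in A\setminus\rho_\epsilon(A),\ s\le x\})=\mu(A\setminus A_{N(A)})<\epsilon$, as required.

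The one substantive step is verifying that $\rho_\epsilon$ is Borel, and this is where the previous claim is essential. For each fixed $n$, the assignment $A\mapsto A\setminus A_n=A\setminus\{s_0,\dots,s_{n-1}\}$ is continuous as a map $\mathcal{P}(2^{<\omega})\to\mathcal{P}(2^{<\omega})$ and preserves antichains, so composing with the Borel map $\mu$ from the previous claim yields that $A\mapsto\mu(A\setminus A_n)$ is Borel. Hence for each $n\in\omega$ the set $\{A\in\mathbb{A}:N(A)\le n\}=\{A\in\mathbb{A}:\mu(A\setminus A_n)<\epsilon\}$ is Borel, so $N$ is Borel and therefore so is $A\mapsto A_{N(A)}=\rho_\epsilon(A)$. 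There is no real obstacle beyond this measurability check — it is precisely the Borelness of $\mu$ on $\mathbb{A}$ that makes the minimal-$n$ choice work uniformly in $A$.
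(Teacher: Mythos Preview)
Your proof is correct and essentially identical to the paper's: both fix an enumeration $(s_n)$ of $2^{<\omega}$, define $\rho_\epsilon(A)=A\cap\{s_n:n<N(A)\}$ with $N(A)$ the least $m$ for which the tail $\mu(A\setminus A_m)<\epsilon$, and argue Borelness via the previous claim that $\mu$ is Borel on $\mathbb{A}$. The paper phrases the tail measure as $\mu(A)-\sum_{n<m}\beta(A)(s_n)$ for a continuous coefficient map $\beta$, but since the antichain cylinders are disjoint this is exactly your $\mu(A\setminus A_m)$, so the two arguments coincide.
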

\begin{proof}
The map $\beta:\mathbb{A}\to \prod_{s_n\in 2^\omega} [0,1]$ where $\beta(A)(s_n)=\frac{1}{2^{|s_n|}}$ if $s\in A$ and $0$ otherwise is continuous. Therefore the map $\alpha:\mathbb{A}\to \mathbb{N}$ defined by $\alpha(A)=\min\{m:\mu(A)-\sum_{n<m}(\beta(A)(s_n))<\epsilon\}$ is Borel and so is $\rho_\epsilon(A):=A\cap \{s_n:n<\alpha(A)\}$.
\end{proof}

\begin{lemma}
Let $\epsilon>0$ and denote by $\mathbb{A}_{fin}$ the set of all finite antichains in $2^{\omega}$.
There is a Borel map $\Phi_{\epsilon}:\mathcal{P}(2^{<\omega})\to \prod_n \mathbb{A}_{fin}$ such that 
\begin{itemize}
\item $\Phi_{\epsilon}(A)(n)\subseteq \varphi_n(A)$,
\item $\mu(X_A\setminus \cap_n \{x:\exists s\in \Phi_{\epsilon}(A)(n),s\le x\})<\epsilon$.
\end{itemize}
\end{lemma}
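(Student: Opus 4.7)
The plan is to truncate each maximal antichain $\varphi_n(A)$ independently via $\rho_\delta$ with a geometrically summable sequence of error parameters. Concretely, I would define
$$\Phi_{\epsilon}(A)(n) := \rho_{\epsilon/2^{n+1}}\bigl(\varphi_n(A)\bigr).$$
Borel-ness of the resulting map is immediate: $\varphi_n$ is continuous and $\rho_\delta$ is Borel by the preceding claims, their composition is Borel, and a map into the countable product $\prod_n \mathbb{A}_{fin}$ is Borel iff each coordinate is. Each value $\Phi_{\epsilon}(A)(n)$ is a finite sub-antichain of $\varphi_n(A)$ by the construction of $\rho$.

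For the measure estimate I would write $U_n := \{x \in 2^\omega : \exists s \in \varphi_n(A),\ s \le x\}$ and $V_n := \{x : \exists s \in \Phi_{\epsilon}(A)(n),\ s \le x\}$. Because $\varphi_n(A)$ is a maximal antichain among nodes having exactly $n-1$ predecessors in $A$, a straightforward walk up from the root gives $U_n = \{x : |\{s \in A : s \le x\}| \ge n-1\}$, so the sets $U_n$ are decreasing with $\bigcap_n U_n = X_A$. Since $\varphi_n(A)$ is an antichain, each $x \in U_n$ is covered by a unique $s \in \varphi_n(A)$, which yields the crucial identity
$$U_n \setminus V_n = \{x : \exists s \in \varphi_n(A) \setminus \Phi_{\epsilon}(A)(n),\ s \le x\}.$$
Applying the previous claim to $\varphi_n(A)$ with parameter $\epsilon/2^{n+1}$ then gives $\mu(U_n \setminus V_n) < \epsilon/2^{n+1}$.

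The final step simply combines these estimates: since $X_A \subseteq U_n$ for every $n$,
$$X_A \setminus \bigcap_n V_n \;=\; \bigcup_n (X_A \setminus V_n) \;\subseteq\; \bigcup_n (U_n \setminus V_n),$$
whence $\mu\bigl(X_A \setminus \bigcap_n V_n\bigr) \le \sum_n \mu(U_n \setminus V_n) < \sum_n \epsilon/2^{n+1} = \epsilon$. I expect no substantive obstacle here; the only points requiring a moment of care are the set-theoretic identity $\bigcap_n U_n = X_A$, which is essentially the standard $G_\delta$-description of $X_A$ via the levels $\varphi_n(A)$, and the strictness of the final inequality, which is preserved because each term in the sum is strictly bounded by a positive $\epsilon/2^{n+1}$.
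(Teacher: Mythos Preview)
Your proof is correct and uses exactly the same definition as the paper, namely $\Phi_\epsilon(A)(n)=\rho_{\epsilon/2^{n+1}}(\varphi_n(A))$; the paper states this formula without further justification, and you have simply supplied the routine verification of Borelness and the measure estimate that the paper leaves to the reader.
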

\begin{proof}
Simply put $\Phi_\epsilon(A)(n)=\rho_{\frac{\epsilon}{2^{n+1}}}(\varphi_n(A))$.
\end{proof}

Let us enumerate $\Omega$ and recursively define a function $\alpha:\Omega\to \mathbb{A}_{fin}$ such that
\begin{itemize}
\item for every $x\in D$ there is $s\in \alpha(D)$ such that $x\upharpoonright |s|=s$,
\item for every $x,t\in \alpha(D)$ is $|s|=|t|$,
\item for every $n\in \omega$ is $|s|<|t|$ where $s\in D_n$ and $t\in D_{n+1}$.
\end{itemize}

\begin{theorem}
The Solecki ideal $\mathcal{S}$ has a selector.
\end{theorem}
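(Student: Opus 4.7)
The plan is, given an infinite $A\subseteq \Omega$, to Borel-extract a single $y_A\in 2^\omega$ belonging to infinitely many $D\in A$; then
$$S(A):=\{D\in A:\,y_A\in D\}$$
is an infinite subset of the generator $\{D\in\Omega:\,y_A\in D\}$ of $\mathcal{S}$, so $S(A)\in\mathcal{S}$, and for finite $A$ one simply sets $S(A):=A$ (the test ``$A$ infinite'' is Borel).

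To produce $y_A$, first form
$$X_A^{*}:=\bigcup_{D\in A}\alpha(D)\subseteq 2^{<\omega},$$
which is Borel in $A$. The three defining properties of $\alpha$---each $\alpha(D)$ is a finite antichain of equal-length strings whose cylinders cover $D$, and the common length strictly increases with the enumeration of $\Omega$---entail that the $G_\delta$ set $X_{X_A^{*}}$ attached to $X_A^{*}$ by the coding above coincides with $C_A$. Indeed, for each $D\in A$ and each $x\in 2^\omega$, $x$ extends a (necessarily unique) string of $\alpha(D)$ precisely when $x\in D$, and strings coming from different $\alpha(D)$'s lie at different levels, so the prefixes of $x$ in $X_A^{*}$ biject with $\{D\in A:\,x\in D\}$. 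Since $\mu(C_A)\ge \tfrac{1}{2}$, applying the previous lemma with $\epsilon=\tfrac{1}{4}$ yields finite antichains $F_n:=\Phi_{1/4}(X_A^{*})(n)\subseteq\varphi_n(X_A^{*})$ for which
$$T_A:=\bigcap_{n}\bigcup_{s\in F_n}\{x\in 2^\omega:\,s\sqsubseteq x\}$$
is Borel in $A$ and, by the measure estimate, satisfies $\mu(T_A)\ge \mu(C_A)-\tfrac{1}{4}\ge \tfrac{1}{4}>0$. In particular $T_A$ is a nonempty, finitely branching compact subset of $2^\omega$, so its leftmost branch $y_A$ is a Borel function of $A$.

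Finally one checks $y_A\in C_A$: for every $n$ there is $s_n\in F_n\subseteq\varphi_n(X_A^{*})$ with $s_n\sqsubseteq y_A$, and by definition of $\varphi_n$ this $s_n$ has $n-1$ prefixes in $X_A^{*}$; those prefixes are in turn prefixes of $y_A$. Hence $y_A$ has arbitrarily many, thus infinitely many, prefixes in $X_A^{*}$, so $y_A\in X_{X_A^{*}}=C_A$. This gives $|S(A)|=|A|=\omega$, $S(A)\subseteq A$, and $S(A)\in\mathcal{S}$, and the whole map $A\mapsto S(A)$ is Borel. The only nontrivial verification is the identification $X_{X_A^{*}}=C_A$, which is where the very particular shape of $\alpha$ (equal-length strings at strictly increasing levels across the enumeration of $\Omega$) is used; everything else is a routine assembly of the Borel gadgets built earlier in the section together with the standard Borel selection of the leftmost branch of a compact tree.
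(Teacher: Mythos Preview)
Your argument is correct and rests on the same scaffolding as the paper's (the map $\alpha$, the identification $X_{X_A^{*}}=C_A$, and the approximation lemma $\Phi_{1/4}$); the difference is only in how the final Borel selection is organized. The paper translates $\Phi_{1/4}(C(X))$ back to finite sets $\Lambda(X)(n)\subseteq X$ of clopen sets, builds a finitely branching tree on $\Omega$ whose nodes are sequences $(D_0,\ldots,D_{k-1})$ with $D_i\in\Lambda(X)(i)$ and $\bigcap_{i\le k-1} D_i\neq\emptyset$, invokes Fact~1 to pick a branch $b$, and outputs $\{b(n):n<\omega\}$; a point of $\bigcap_n b(n)$ witnesses membership in $\mathcal S$ but is never explicitly produced. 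You instead stay in $2^\omega$: you take the lexicographic minimum $y_A$ of the closed set $T_A=\bigcap_n\bigcup_{s\in F_n}[s]$ (which, as your last paragraph in effect shows, is actually contained in $C_A$, not merely of positive measure inside it), and then read the selector off as $\{D\in A:y_A\in D\}$. Your route is a touch more direct since the witnessing point is explicit and the selector is literally a subset of a generator of $\mathcal S$; the paper's route makes the appeal to Fact~1 cleaner because the tree on $\Omega$ is visibly finitely branching. The underlying idea is the same.
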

\begin{proof}
The function $C:[\Omega]^\omega\to \mathcal{P}(2^{<\omega})$ defined as $C(X)=\bigcup_{D\in X} \alpha(D)$ is a continuous function that assigns to each $X$ the code of $C_X$.
By lemma there is a Borel function $\Lambda:[\Omega]^\omega\to \prod_n[\Omega]^{<\omega}$ such that 
\begin{itemize}
\item $\Lambda(X)(n)\subseteq X$,
\item $|\{n:D\in \Lambda(X)(n)\}|<\omega$ for every $D\in \Omega$,
\item $\mu(C_X\setminus \cap_n(\bigcup\Lambda(X)(n)))<\frac{1}{4}$.
\end{itemize}
Simply put $D\in \Lambda(X)(n)$ if there is $s\in \alpha(D)\cap \Phi_{\frac{1}{4}}(C(X))$. The first and third condition are verified easily and so is the second if one realizes that $|\alpha(D)|<\omega$.

Let us define a tree $T_X$ on $\Omega$ where $s\in T_X$ if and only if 
\begin{itemize}
\item $\bigcap_{n\le |s|} s(n)\not=\emptyset$,
\item $s(n)\in \Lambda(X)(n)$.
\end{itemize}
Then $T_X$ is finitely branching because each $\Lambda(X)(n)$ is finite and ill-founded because 
$$\mu(C_X\setminus \cap_n(\bigcup\Lambda(X)(n)))<\frac{1}{4}$$
but $\mu(C_X)\ge\frac{1}{2}$ so there must be $x\in C_X$ such that $x\in \bigcup\Lambda(X)(n)$ for each $n\in \omega$. Therefore we may find  for each $n\in \omega$ some $D\in \Lambda(X)(n)$ which gives us the desired branch. Now we may apply Fact 1 and find a branch $b\in [T_X]$ in a Borel way. Then $\{b(n)\}_{n<\omega}$ is in $\mathcal{S}$ and thanks to condition two in the definition of $\Lambda$ it must be infinite. 
\end{proof}

The ideals of the form $\mathcal{I}_c$ have a Borel selector. Let $A\subseteq 2^{<\omega}$ and define $T_A=\{s:\exists a\in A, s\sqsubseteq a\}$. Then $T_A\in IF_f$ so there we can choose a branch in a Borel way. Then either this branch has infinite intersection with $A$ or we can use it to build an antichain in a Borel fashion.

One can nicely characterize those Borel ideals, for which there is a selector which is continuous. Recall that an ideal $\mathcal I$ is \emph{$\omega$-hitting} if given
a family $\{X_n:n\in\omega\}$ of infinite subsets of $\omega$ there is an $I\in\mathcal I$ such that $I\cap X_n\neq\emptyset$ for every $n\in\omega$.

\begin{proposition}
Let $\mathcal{I}$ be a Borel ideal, then $\mathcal{I}$ has a continuous selector if and only if $\mathcal{I}$ is $\omega$-hitting.
\end{proposition}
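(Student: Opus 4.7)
My plan is to handle the two implications separately.

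\textbf{For $(\Rightarrow)$} I would use a Baire category argument. Suppose $S\colon 2^\omega\to 2^\omega$ is a continuous selector and $\{X_n\}_{n\in\omega}$ is a family of infinite subsets of $\omega$. I set
$$V_n=\{A\in 2^\omega:S(A)\cap X_n\neq\emptyset\}=\bigcup_{m\in X_n}\{A:m\in S(A)\},$$
which is open because each $\{A:m\in S(A)\}$ is clopen by continuity of $S$. To see $V_n$ is dense, for any $s\in 2^{<\omega}$ I take $A:=\supp(s)\cup(X_n\setminus|s|)$: this $A$ lies in $[s]$, is infinite, and $S(A)\subseteq A$ is infinite and in $\mathcal I$, so $S(A)\setminus|s|\subseteq X_n$ is nonempty. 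The Baire category theorem then produces $A\in\bigcap_nV_n$, and $I:=S(A)\in\mathcal I$ is the desired hitting set.

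\textbf{For $(\Leftarrow)$} I would first reformulate the task. A continuous map $S\colon 2^\omega\to 2^\omega$ with $S(A)\subseteq A$ is uniquely encoded by a function $\psi\colon\{t\in 2^{<\omega}:t(|t|-1)=1\}\to\{0,1\}$ via $n\in S(A)\iff\psi(A\upharpoonright n+1)=1$, and conversely every such $\psi$ yields a continuous $S$ with $S(A)\subseteq A$. The two remaining selector requirements translate into (i) $\{n:\psi(A\upharpoonright n+1)=1\}$ is infinite whenever $A$ is infinite, and (ii) this set lies in $\mathcal I$ for every $A$. The plan is to construct such a $\psi$ from $\omega$-hitting. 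I would organize $2^{<\omega}$ into levels indexed by a refinement tree of $\omega$; at each level, apply $\omega$-hitting to the countable family of infinite cells appearing at that level to obtain a set $J\in\mathcal I$ meeting every cell, and let $\psi$ activate those nodes whose terminal position is the designated $J$-witness for the cell containing it.

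The main obstacle is reconciling (i) and (ii). A single $J\in\mathcal I$ cannot meet every infinite $A$, since $\omega\setminus J$ is itself an infinite set disjoint from $J$, so any construction drawing all commitments from one fixed $J$ fails (i) on the branch $A=\omega\setminus J$. The resolution I propose is to iterate $\omega$-hitting along the binary tree, producing a coherent family $\{J_t:t\in 2^{<\omega}\}\subseteq\mathcal I$ of local targets, and to design $\psi$ so that along any branch $A$ the activated positions are harvested from the sequence $(J_{A\upharpoonright n})_n$. The Borel hypothesis on $\mathcal I$ is then used to control the countable union along each branch and keep it inside $\mathcal I$. I expect the technical heart of the proof to be arranging these refinements so the induced tree of commitments is simultaneously rich enough to satisfy (i) on every branch and cohesive enough to satisfy (ii); once the commitment scheme is in place, verifying continuity of $S$ and the two selector properties is routine.
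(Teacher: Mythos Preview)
Your argument for $(\Rightarrow)$ is correct and is a pleasant alternative to the paper's direct recursive construction. The paper assumes $\mathcal I$ is \emph{not} $\omega$-hitting, fixes a witnessing family $\{A_n\}$, and for an arbitrary continuous $g$ with $g(A)\subseteq A$ builds by hand a set $B$ such that $g(B)$ meets every $A_n$; the construction exploits continuity at each finite stage to pin down one point of $g(B)$ inside $A_n$. Your Baire-category version packages the same idea more cleanly: the density of each $V_n$ is exactly the local continuity step, and the category theorem replaces the explicit limit. Both arguments use the same ingredients; yours is shorter.

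Your plan for $(\Leftarrow)$, however, has a real gap. The specific mechanism you propose --- iterate $\omega$-hitting along the binary tree to obtain $\{J_t:t\in 2^{<\omega}\}\subseteq\mathcal I$ and then argue that ``the Borel hypothesis on $\mathcal I$ is used to control the countable union along each branch and keep it inside $\mathcal I$'' --- cannot work as stated: Borel ideals are in general not $\sigma$-ideals, so there is no reason $\bigcup_n J_{A\upharpoonright n}\in\mathcal I$, and Borelness alone gives you no such closure. Nothing in your outline explains how the commitments along a branch are forced to land in a \emph{single} element of $\mathcal I$, which is the entire content of condition (ii).

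The paper sidesteps this difficulty by quoting a structural characterization (from Hru\v s\'ak--Meza--Minami): a Borel ideal is $\omega$-hitting iff there is a partition of $\omega$ into finite intervals $\{I_n\}$ such that every set meeting each $I_n$ in at most one point lies in $\mathcal I$. Given such a partition, the selector $g(A)=\{\min(A\cap I_n):A\cap I_n\neq\emptyset\}$ is transparently continuous and lands in $\mathcal I$ for every $A$. This is where the Borel hypothesis is actually consumed --- in proving that characterization --- not in any countable-union closure. If you want to avoid citing the result, you would need to reproduce that argument (which is a genuine piece of work, not a routine verification); your tree-of-$J_t$'s scheme, as written, does not get there.
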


\begin{proof} Assume first that an ideal $\I$ is $\omega$-hitting.
In \cite{hrusak-meza-minami} it is shown that a Borel ideal $\mathcal I$ is $\omega$-hitting if and only if there is a partition of $\omega$ into finite intervals $\{I_n:n\in\omega\}$ with the property that every set $A\subseteq\omega$ intersecting each interval $I_n$ in at most one point is in the ideal $\mathcal I$. Fix such a partition $\{I_n:n\in\omega\}$, and let
$$g(A)=\{ \min A\cap I_n: n\in\omega \text{ and } A\cap I_n\neq \emptyset\}.$$
Then $g$ is continuous, and 
$g(a)$ is an infinite subset of $A$ for every infinite $A\subseteq \omega$, and by the above property $g(A)\in\mathcal I$.

Now assume that $\mathcal I$ is not $\omega$-hitting, i.e there is a family
 $\{A_n:n\in\omega\}\subseteq [\omega]^\omega$ such that no element of $\mathcal I$ intersects them all, and let $g:[\omega]^\omega\to [\omega]^\omega$ be a continuous function such that $g(A)\subseteq A$ for every $A\subseteq\omega$. to finish the proof we shall show that there is an $A\subseteq \omega $ such that $g(A)\not\in \mathcal I$. To that end we shall recursively construct a sequence $\{B_n:n\in\omega\}\subseteq [\omega]^\omega$
 and an increasing sequence of positive integers $n_0<m_0< \dots n_i<m_i<n_{i+1}<m_{i+1}<\dots$ such that for every $i\in\omega$
 \begin{enumerate}
 \item $B_0=A_0$, 
 \item $n_i\in g(B)$ for every $B\in [\omega]^\omega$ such that $ B\cap m_i=B_i\cap m_i $, 
 \item $B_{i+1}\cap m_i= B_i$, and  
 \item $B_{i+1}\setminus m_i= A_{i+1}\setminus m_i$.
\end{enumerate}
  Doing this is easy using continuity of $g$. Now, notice that by (3) the sequence $\{B_n:n\in\omega\}$ converges to some  $B\in[\omega]^\omega$, and
  $g(B)\supseteq\{n_i:i\in\omega\}$ by (2) and the continuity of $g$. Then, however, $n_i\in g(B)\cap A_i$ for every $i\in\omega$, hence $g(B)\not\in\mathcal I$ (no set in $\I$ intersects all of the $A_i$'s).
  \end{proof}

Note that there are many tall Borel ideals which are not $\omega$-hitting hence do not admit a continuous selector, for inctance the ideal $\mathsf{nwd}$ of nowhere dense subsets of the rationals, or the ideal $\mathsf{fin}\times\mathsf{fin}$ (see e.g. \cite{hrusak-filters}).

The original question of Uzcategui, however, remains open:

\begin{question}[Uzcategui \cite{uzcategui}]
Does every tall Borel ideal admit a Borel selector?
\end{question}

As noted by Uzcategui, it is not even clear whether every tall $F_\sigma$ ideal admits a Borel selector.

\end{document}